\documentclass[11pt]{article}
\usepackage{amsmath, amssymb, amsthm}
\usepackage{subfig}
\usepackage[margin=1in]{geometry}
\usepackage{bbm}
\usepackage{graphicx, verbatim}
\usepackage{setspace}
\usepackage{url}
\usepackage{float}

\newtheorem{thm}{Theorem} % Theorem environment
\newtheorem{cor}{Corollary}    % Corollary environment
\newtheorem{lemma}{Lemma}        % Lemma environment
\newtheorem{remark}{Remark}

\doublespacing

\title{Strong Convergence  of Euler  Approximations of Stochastic Differential  Equations with Delay under Local Lipschitz Condition}
\author{Chaman Kumar and Sotirios Sabanis\footnote{The authors would like to thank Lukas Szpruch for his useful discussions.} \\ School of Mathematics, \\ University of Edinburgh, \\ Edinburgh, EH9 3JZ, U.K.}

\date{\today}

\begin{document}
\pagenumbering{arabic}
\maketitle
\begin{abstract}
The strong convergence of Euler  approximations of stochastic delay differential equations is proved under general conditions. The assumptions on drift and diffusion coefficients have been relaxed to include polynomial growth and only continuity in the arguments corresponding to delays. Furthermore, the rate of convergence is obtained under one-sided and polynomial Lipschitz conditions. Finally, our findings are demonstrated with the help of numerical simulations.
\newline \newline
\textit{Keywords}: stochastic delay differential equations, Euler approximations, strong convergence, rate of convergence, local Lipschitz condition.
\end{abstract}
%\begin{acknowledgements}
%My acknowledgements.
%\end{acknowledgements}
%\declaration
%\tableofcontents
\section{Introduction}
In modeling many real world phenomena, the future states of the system depend not only on the present state but also on its past state(s). The models based on  stochastic delay differential equations (SDDEs) could be used in such situations. They have found numerous applications in various fields, for example,  in  communications, physics, biology, ecology, economics and finance. One could refer to  \cite{arriojas2007} - %, \cite{christian1996}, \cite{Federico2010}, \cite{Federico2011}, \cite{goldobin2003}, \cite{huber2003},  \cite{mao1997}, \cite{mao2012}, \cite{nairn2011},
\cite{Oksendal2011}  and references therein.

In many applications, one finds SDDEs for which   solutions can not be obtained explicitly and hence this necessitates the development of numerical schemes. Over the past few years, many authors have shown their interest in studying strong convergence properties of numerical schemes of SDDEs, to name a few, \cite{baker2000} - %, \cite{gs2012},  \cite{hu2004}, \cite{kuchler2000},   \cite{mao2003} and
\cite{maosabanis}.   Recently there has been a growing interest in weak convergence of numerical schemes as well, see for example, \cite{buckwar2008}, \cite{buckwar2005} and \cite{kuchler2002}. We also mention  a paper by \cite{kloeden2007} on pathwise approximations and its relation to strong convergence. For the purpose of this article, we study strong convergence of Euler  approximations of SDDEs due to its wide applicability. We exploit the fact that SDDEs can be regarded as  special cases of  stochastic differential equations (SDEs) with random coefficients. To this end,  first we establish strong convergence results of  Euler  schemes of SDEs with random coefficients under mild conditions. This, in turn, allows us to prove the convergence in the case of SDDEs under more relaxed conditions than those existing in the literature. More precisely, to the best of our knowledge, strong convergence has been proved so far by assuming linear growth and local Lipschitz conditions in both variables corresponding to delay and non-delay arguments, see for example \cite{maosabanis}.  Whereas, we relax these conditions by assuming polynomial growth and continuity in the argument corresponding to delays. Moreover no smoothness condition on the initial data is assumed.  Therefore, the setting in this article is more general than those present in the literature, see for example, \cite{baker2000},  \cite{kuchler2000}, \cite{mao2003},  \cite{maosabanis} and \cite{kloeden2007}. In addition, it is shown here that the rate of convergence is one-fourth when the drift coefficient satisfies one-sided Lipschitz and polynomial Lipschitz in the non-delay and delay arguments respectively. This result is in agreement with the findings of \cite{gyongy1998}.

We conclude this section with the introduction of some basic notation. The norm of a vector $x \in \mathbb R^d$ and the Hilbert-Schmidt norm of a matrix $A\in \mathbb R^{d\times m}$ are respectively denoted by $|x|$ and $|A|$. The transpose of a vector $x \in \mathbb R^d$ is denoted by $x^{T}$ and the inner product of two vectors $x, y \in \mathbb R^d$ is denoted by $\langle x, y \rangle := x^{T}y$. The integer part of a real number $x$ is denoted by $[x]$.  We denote the indicator function of set $A$ by $\mathbbm {1}_A$. Moreover, $\mathcal L^p=\mathcal L^p(\Omega, \mathcal F, \mathbb P)$ denotes the space of random variables $X$ with a norm $\|X\|_p:=\big(\mathbb E\big[|X|^p\big]\big)^{1/p} < \infty$ for $p>0$. Finally, $\mathcal P$ denotes the predictable $\sigma$-algebra on $\mathbb R_+\times \Omega$ and $\mathcal B(V)$, the $\sigma$-algebra of Borel sets of topological spaces $V$.

\section{Main Result}
Let $(\Omega, \{\mathcal F_t\}_{\{t \geq 0\}}, \mathcal F, \mathbb P)$  be a filtered probability space satisfying the usual conditions, i.e. the filtration is increasing and right continuous. Let $\{W(t)\}_{\{t \geq 0\}}$ be an $m$-dimensional Wiener martingale. Furthermore, it is assumed that $\beta(t, y_1, \ldots, y_k, x)$ and $\alpha(t, y_1, \ldots, y_k, x)$ are $\mathcal B(\mathbb R_+) \otimes \mathcal B(\mathbb R^{d\times k}) \otimes \mathcal B(\mathbb R^d)$-measurable functions and take values in $\mathbb R^d$ and $\mathbb R^{d \times m}$ respectively. For a fixed $T>0$, let the stochastic delay differential equation (SDDE) on $(\Omega, \{\mathcal F_t\}_{\{t \geq 0\}}, \mathcal F, \mathbb P)$ be defined  as follows,
\begin{eqnarray}
\begin{array}{lcll}
dX(t) &=& \beta(t, Y(t), X(t) )dt+\alpha(t, Y(t), X(t) )dW(t), & t \in [0,T],
\\
X(t)& =& \xi(t), & t \in [-H, 0 ], \label{sdde}
\end{array}
\end{eqnarray}
where $\{\xi(t): -H \leq t \leq 0\} \in \mathcal C_{\mathcal F_0}^{b}([-H,0];\mathbb R^d)$ for  some $H >0$ and $Y(t):=(X(\delta_1(t)), \ldots, X(\delta_k(t)))$.  The delay parameters $\delta_1(t), \ldots, \delta_k(t)$ are increasing functions of $t$ and satisfy $-H \leq \delta_j(t) \leq [\frac{t}{\tau}]\tau$ for some $\tau>0$ and $j=1,\ldots, k$.

Further, for every  $n \geq 1$ and $t \in [0,T]$, the Euler  scheme of SDDE \eqref{sdde} is given by
\begin{eqnarray}
\begin{array}{lcll}
dX_n(t) &=& \beta(t, Y_n(t), X_n(\kappa_n(t))) dt + \alpha(t, Y_n(t), X_n(\kappa_n(t))) dW(t), & t \in [0, T],
\\
X_n(t) &=& \xi(t), & t \in [-H, 0], \label{sddeem}
\end{array}
\end{eqnarray}
where $Y_n(t):=(X_n(\delta_1(t)), \ldots, X_n(\delta_k(t)))$ and $\kappa_n$ is defined by
\begin{eqnarray}
\kappa_n(t):=\frac{[n(t-t_0)]}{n}+t_0 \label{kappa}
\end{eqnarray}
with the observation that for equation \eqref{sddeem} one takes $t_0=0$ in \eqref{kappa}. We note that two popular cases of delay viz. $\delta_i(t)=t-\tau$ and $\delta_i(t)=\Big[\frac{t}{\tau}\Big]\tau$ can be addressed by our findings. This type of delay parameters have been extensively discussed and found wide applications in the literature. The reader could consult the following, for example, \cite{arriojas2007} - %, \cite{christian1996},  \cite{Federico2010}, \cite{Federico2011}, \cite{goldobin2003},
\cite{huber2003}, \cite{Oksendal2011} and  references therein.
\begin{remark}
We note that the Euler scheme \eqref{sddeem} defines approximations to SDDEs in an explicit way without a discretization  of the delay terms. From our main theorem and corollaries on convergence of scheme \eqref{sddeem}, one could easily obtain results on convergence of this scheme  with discretized delay terms. Therefore for matters of notational simplicity, we choose the former approach.
\end{remark}
Let  $y := (y_1, \ldots, y_k)$. We make the following assumptions for our result.
\newline
\textbf{C-1.} There exist constants $G >0$ and $l>0$ such that for any $t \in [0,T]$,
%\begin{eqnarray}
$$|\beta(t, y, x)|+|\alpha(t, y, x)| \leq G(1+|y|^l+|x|)$$ %\notag
%\end{eqnarray}
for all $x  \in \mathbb R^d$ and $y \in \mathbb R^{d \times k}$.
\newline
\textbf{C-2.}  For every $R>0$, there exists a constant $K_R >0 $ such that for any $t \in [0,T]$,
\begin{eqnarray}
\langle x-z,\beta(t, y, x)-\beta(t, y, z) \rangle \vee |\alpha(t, y, x)-\alpha(t, y, z)|^2 \leq K_R |x-z|^2 \notag
\end{eqnarray}
whenever $|x|, |y|, |z| < R$.
\newline
\textbf{C-3.} The functions $\beta(t,y,x)$ and $\alpha(t,y,x)$ are continuous in $y$ uniformly in $x$ from compacts, i.e. for every $R>0$ and $t \in [0,T]$,
\begin{eqnarray}
\sup_{|x| \leq R}\{|\beta(t, y, x)-\beta(t, y^{'}, x)|+|\alpha(t, y, x)-\alpha(t, y^{'}, x)|\} \rightarrow 0 \quad \mbox{as} \quad y^{'} \rightarrow y. \notag
\end{eqnarray}

The conditions C-1 and C-2 are sufficient for existence and uniqueness of solution of SDDE \eqref{sdde} and the Euler scheme \eqref{sddeem}(see \cite{gs2012}, \cite{renesse2010}). We state the main result of this paper in the following theorem.
\begin{thm}
\label{mainthm}
Suppose C-1 to C-3 hold, then the Euler  scheme \eqref{sddeem} converges to the true solution of SDDE \eqref{sdde} in $\mathcal L^p$-sense, i.e.
\begin{eqnarray}
\lim_{n \rightarrow \infty}\mathbb E \Bigg[ \sup_{0 \leq t \leq T}|X(t)-X_n(t)|^p\Bigg]=0 \notag
\end{eqnarray}
for all $p >0 $.
\end{thm}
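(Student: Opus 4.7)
The plan is to exploit the fact that on each interval $[j\tau, (j+1)\tau]$, both $Y(t)$ and $Y_n(t)$ are determined by the values of $X$, respectively $X_n$, on $[-H, j\tau]$, since the delay bound $\delta_i(t) \leq [t/\tau]\tau \leq j\tau$ forces the delayed arguments to fall in the ``past'' interval. Thus, once the solution is controlled on $[-H, j\tau]$, both \eqref{sdde} and \eqref{sddeem} may be regarded on $[j\tau, (j+1)\tau]$ as an SDE (respectively, an Euler scheme) with random coefficients in the $x$-variable. The proof will be by induction on $j = 0, 1, \ldots, \lceil T/\tau \rceil$, appealing at each step to the strong convergence theorem for Euler approximations of SDEs with random coefficients which, as the authors note in the introduction, is established separately in the paper.

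Before the induction I would establish uniform $\mathcal L^p$ moment bounds for every $p > 0$: namely, $\mathbb E[\sup_{0 \leq t \leq T}|X(t)|^p] < \infty$ and $\sup_{n}\mathbb E[\sup_{0 \leq t \leq T}|X_n(t)|^p] < \infty$. These are obtained interval-by-interval using the polynomial growth condition C-1, together with Burkholder--Davis--Gundy and Gr\"onwall inequalities; on $[0,\tau]$ the delay terms are controlled by $\xi \in \mathcal C_{\mathcal F_0}^{b}$, and for later intervals the $|Y|^l$ contribution is bounded using the $pl$-th moment obtained at the previous step.

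For the inductive step, assume $\mathbb E[\sup_{0 \leq t \leq j\tau}|X(t)-X_n(t)|^p] \to 0$ for every $p>0$. On $[j\tau,(j+1)\tau]$ I would introduce an auxiliary process $\widetilde{X}_n$ defined by the Euler recursion \eqref{sddeem} but with the true delay vector $Y(t)$ in place of $Y_n(t)$. Since $Y(t)$ on this interval is a predictable random input not depending on $\widetilde{X}_n$ itself, $\widetilde{X}_n$ is precisely an Euler approximation of an SDE with random coefficients $\beta(t,Y(t),\cdot)$ and $\alpha(t,Y(t),\cdot)$, whose $x$-variable verifies C-1 and C-2 uniformly on bounded sets; the preliminary theorem then yields $\mathbb E[\sup_{j\tau \leq t \leq (j+1)\tau}|X(t) - \widetilde{X}_n(t)|^p] \to 0$. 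Comparing $\widetilde{X}_n$ with $X_n$ via It\^o's formula applied to $|\widetilde{X}_n - X_n|^2$ and using the local monotonicity of $\beta$ in $x$ and local Lipschitzness of $\alpha$ from C-2, one picks up on the right-hand side an integrated error of the form
\begin{equation*}
\mathbb E\int_{j\tau}^{(j+1)\tau}\bigl(|\beta(s,Y(s),\widetilde X_n(\kappa_n(s)))-\beta(s,Y_n(s),\widetilde X_n(\kappa_n(s)))|^p + |\alpha(\cdot)-\alpha(\cdot)|^p\bigr)ds,
\end{equation*}
which is controlled by C-3 combined with the induction hypothesis $Y_n \to Y$ in probability and the uniform moment bounds (a Vitali-type uniform integrability argument), yielding convergence of $X_n$ to $\widetilde X_n$ and, by the triangle inequality, of $X_n$ to $X$ on $[j\tau,(j+1)\tau]$.

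The main obstacle is this last coefficient-comparison step: condition C-3 is purely qualitative (continuity without modulus), and C-2 is only local. One must therefore introduce stopping times $\tau_R := \inf\{t : |X(t)|\vee|X_n(t)|\vee|Y(t)|\vee|Y_n(t)| \geq R\}$, run the $\mathrm{It\hat o}$-based comparison up to $\tau_R$ where C-2 applies with a constant $K_R$, and close the argument by letting first $n \to \infty$ (using C-3, the induction hypothesis, and the dominated convergence theorem with a uniformly $\mathcal L^q$-bounded dominant for any $q>p$ supplied by the moment bounds) and then $R \to \infty$ (using $\mathbb P(\tau_R < T) \to 0$ uniformly in $n$). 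The induction then propagates convergence from $[0,j\tau]$ to $[0,(j+1)\tau]$, and after finitely many steps reaches $[0,T]$.
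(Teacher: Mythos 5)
Your proposal is correct and shares the paper's overall architecture --- induction over the delay intervals $[j\tau,(j+1)\tau]$, uniform moment bounds from C-1 via Burkholder--Davis--Gundy and Gronwall, reduction of the SDDE on each interval to an SDE with random coefficients, and localization by stopping times with C-3 plus a uniform-integrability argument closing the coefficient comparison --- but your inductive step is decomposed differently. The paper's preliminary result (Theorem \ref{**}) is stated for an Euler scheme whose coefficients $b_n,\sigma_n$ are allowed to differ from those of the limiting SDE, the discrepancy being controlled by the integrated sup-over-compacts hypothesis A-3; the identification $b(t,x)=\beta(t,Y(t),x)$, $b_n(t,x)=\beta(t,Y_n(t),x)$ then absorbs the delay-perturbation error directly into that theorem, so the entire inductive step reduces to verifying A-1 to A-5 and no auxiliary process is needed. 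You instead split the error through an intermediate Euler scheme $\widetilde X_n$ driven by the true delay $Y(t)$, which buys a cleaner appeal to a ``same-coefficients'' convergence statement for the first half, at the cost of a second comparison between two Euler schemes. That second comparison is workable but not free: in the It\^o estimate for $|\widetilde X_n-X_n|^2$ the one-sided Lipschitz condition C-2 applies at the grid points $\widetilde X_n(\kappa_n(s))$ and $X_n(\kappa_n(s))$, while the inner product produced by It\^o's formula involves $\widetilde X_n(s)-X_n(s)$, so you must add and subtract both one-step increments $|\widetilde X_n(s)-\widetilde X_n(\kappa_n(s))|$ and $|X_n(s)-X_n(\kappa_n(s))|$ and control the resulting cross terms with the growth bound C-1 --- precisely the estimate the paper isolates as Lemma \ref{l4}, now needed for two schemes instead of one. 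The two routes are therefore of comparable length; the paper's choice of building the coefficient perturbation into the preliminary theorem is exactly what lets it dispense with the auxiliary process, and your version goes through provided you carry out that grid-point bookkeeping explicitly.
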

\begin{remark}
We note that we can assume, without loss of generality, that  $T$ is a  multiple of $\tau$. If not, then SDDE \eqref{sdde} and its EM scheme \eqref{sddeem} are defined for $T' > T$ with $T'=N\tau$, where $N$ is a positive integer.  The results given in this report are then recovered for the original SDDE \eqref{sdde} by choosing drift and diffusion parameters as $\beta 1_{\{t\leq T\}}$ and $\alpha 1_{\{t \leq T\}}$.
\end{remark}

First, we develop requisite theory for SDEs with random coefficients in the following section and then prove Theorem \ref{mainthm} in Section \ref{pruf}.

\section{SDEs with Random Coefficients}
Let $b(t,x)$ and $\sigma(t,x)$ be $\mathcal P \otimes \mathcal{B}(\mathbb R^d)$-measurable functions which take values in $\mathbb R^d$ and $\mathbb R^{d \times m}$ respectively. Suppose $0 \leq t_0 < t_1 \leq T$, then let us consider an SDE with random coefficients given by
\begin{eqnarray}
dX(t)=b(t, X(t))dt+\sigma(t,X(t))dW(t), \, \, \, \, \forall  \, \, \, t\in [t_0, t_1], \label{sderc}
\end{eqnarray}
with initial value $X(t_0)$ which is an almost surely finite $\mathcal F_{t_0}$-measurable random variable.

For every $n \geq 1$, let $b_n(t,x)$ and $\sigma_n(t,x)$ be $\mathcal P \otimes \mathcal B(\mathbb R^d)$-measurable functions which take values in $\mathbb R^d$ and $\mathbb R^{d \times m}$ respectively. Then for any $t \in [t_0,t_1]$, we define the following Euler  scheme corresponding to the SDE \eqref{sderc} as,
\begin{eqnarray}
dX_n(t)=b_n(t,X_n(\kappa_n(t)))dt+\sigma_n(t,X_n(\kappa_n(t)))dW(t) \label{em}
\end{eqnarray}
with initial value $X_n(t_0)$, which is an $\mathcal F_{t_{0}}$-measurable, almost surely finite, random variable and $\kappa_n$ as defined in \eqref{kappa}.
\newline
We make the following assumptions.
\newline
\textbf{A-1.} There exists a constant $c>0$ and non-negative random variables $\{M_n\}_{n \geq 1}$ and $M$ with $\mathbb E\big[M^p\big] \vee \sup_{n \geq 1} \mathbb E\big[M_n^p\big] < N$ for every $p >  0$ and some $N:=N(p)>0$  such that, almost surely,
\begin{eqnarray}
&& |b(t,x)| + |\sigma(t,x)| \leq c(M+|x|) \notag
\\
&& |b_n(t,x)| + |\sigma_n(t,x)| \leq c(M_n+|x|) \notag
\end{eqnarray}
for any $t \in [0,T]$ and $x \in \mathbb R^d$.
%
%Furthermore, we assume that for every $n \geq 1$, there exists a constant $c>0$ and a non-negative random variable $M_n$  with $E\big[|M_n|^p\big] < N$ for every $p >0$ and some $N:=N(p)>0$ such that, almost surely,
%\begin{eqnarray}
%|b_n(t,x)| + |\sigma_n(t,x)| \leq c(|M_n|+|x|) \notag
%\end{eqnarray}
%for any $t \in [0,T]$  and $x \in \mathbb R^d$.
\newline
\textbf{A-2.} For every $R>0$, there exists an $\mathcal F_{t_0}$-measurable random variable $C_R$ such that, almost surely, for any $t \in [t_0,t_1]$,
\begin{eqnarray}
\langle x-y, b(t,x)-b(t,y) \rangle \vee |\sigma(t,x)-\sigma(t,y)|^2 \leq C_R |x-y|^2  \notag
\end{eqnarray}
 for all $|x|,  |y| \leq R$. Moreover, there exists a function $f: \mathbb R_+ \rightarrow \mathbb R_+$ such that

$$\lim_{R \rightarrow \infty}\mathbb P (C_R > f(R)) =0.$$
\newline
\textbf{A-3.} For every $R>0$ and $p>0$,
\begin{eqnarray}
\lim_{n \rightarrow \infty}\mathbb E\Bigg[\int_{t_0}^{t_1}\sup_{|x| \leq R}\{|b_n(t,x)-b(t,x)|^p + |\sigma_n(t,x)-\sigma(t,x)|^p\} dt \Bigg]=0.   \notag
\end{eqnarray}
\textbf{A-4.} The initial values of SDE \eqref{sderc} and its Euler  scheme \eqref{em} satisfy
\begin{eqnarray}
\lim_{n \rightarrow \infty}\mathbb E\big[|X(t_0)-X_n(t_0)|^p\big]= 0 \notag
\end{eqnarray}
for every $p >0$.
\newline
\textbf{A-5.} The initial values of  SDE \eqref{sderc} and its Euler scheme \eqref{em} have bounded $p$-th moments, i.e. there exists $L:=L(p)>0$ such that
\begin{eqnarray}
\mathbb E[|X(t_0)|^p]  \, \, \vee \, \,  \sup_{n \geq 1}\mathbb E[|X_n(t_0)|^p] < L \notag
\end{eqnarray}
for every $p >0$.

We note that assumptions A-1 and A-2 are sufficient for existence and uniqueness of solution of SDE \eqref{sderc} (see \cite{gynongykrylov} and \cite{krylov1990}). Before proving the main result (Theorem \ref{**}) of this section, we establish some lemmas.
\begin{lemma}
\label{bbnn}
 Suppose that  A-1 and A-5 hold, then for some $K:=K(p, T, L)>0$,
\begin{eqnarray}
&& \mathbb E \bigg[ \sup_{t_0\leq t \leq t_1}|X(t)|^p\bigg] <K \notag
\\
\mbox{and} \, \,&& \sup_{n \ge 1}\mathbb E \bigg[ \sup_{t_0\leq t \leq t_1}|X_n(t)|^p\bigg] < K \label{hash}
\end{eqnarray}
for every $p >0$.
\end{lemma}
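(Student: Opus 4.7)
The plan is to prove both bounds by a standard Itô-Burkholder-Davis-Gundy-Gronwall sequence, treating first the case of an arbitrary even integer $p \geq 2$ and then deducing the result for $0 < p < 2$ via Jensen's inequality (since $\mathbb{E}[\sup|X|^p] \leq (\mathbb{E}[\sup|X|^{p'}])^{p/p'}$ for any $p' \geq p$). So fix an even integer $p \geq 2$.

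For the true solution $X$, I would first introduce stopping times $\tau_R := \inf\{t \geq t_0 : |X(t)| \geq R\} \wedge t_1$ to ensure all terms are integrable a priori, apply Itô's formula to $|X(\cdot \wedge \tau_R)|^p$, and use A-1 together with Young's inequality to estimate
\begin{eqnarray*}
p|x|^{p-2}\langle x, b(s,x)\rangle + \tfrac{1}{2}p(p-1)|x|^{p-2}|\sigma(s,x)|^2 \leq C\bigl(M^p + |x|^p\bigr).
\end{eqnarray*}
After taking $\sup_{t_0 \leq u \leq t \wedge \tau_R}$ on both sides, I would apply the BDG inequality to the stochastic integral term, giving a bound of the form $C\,\mathbb{E}\bigl[(\int_{t_0}^{t\wedge\tau_R}|X(s)|^{2p-2}|\sigma(s,X(s))|^2\,ds)^{1/2}\bigr]$, and then use $\sqrt{ab} \leq \epsilon a + b/(4\epsilon)$ to absorb an $\epsilon\,\mathbb{E}[\sup_{t_0 \leq u \leq t\wedge\tau_R}|X(u)|^p]$ contribution into the left-hand side. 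The resulting integral inequality
\begin{eqnarray*}
\mathbb{E}\Bigl[\sup_{t_0 \leq u \leq t \wedge \tau_R}|X(u)|^p\Bigr] \leq C\Bigl(\mathbb{E}[|X(t_0)|^p] + \mathbb{E}[M^p] + \int_{t_0}^{t}\mathbb{E}\bigl[\sup_{t_0 \leq u \leq s \wedge \tau_R}|X(u)|^p\bigr]\,ds\Bigr)
\end{eqnarray*}
combined with A-5 and A-1 yields, via Gronwall, a bound independent of $R$. Letting $R \to \infty$ through Fatou's lemma gives the first estimate in \eqref{hash}.

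For the Euler scheme $X_n$, the same computation applies with only cosmetic changes: Itô's formula for $|X_n(t)|^p$ produces coefficients evaluated at $X_n(\kappa_n(s))$ rather than $X_n(s)$, but the pointwise inequality $|X_n(\kappa_n(s))|^p \leq \sup_{t_0 \leq u \leq s}|X_n(u)|^p$ feeds directly into Gronwall's inequality. Since $\sup_{n \geq 1}\mathbb{E}[M_n^p] < N$ and $\sup_{n \geq 1}\mathbb{E}[|X_n(t_0)|^p] < L$ by A-1 and A-5, the resulting Gronwall constant does not depend on $n$, which yields the uniform bound. For $X_n$ one can either run the stopping-time argument as above or, alternatively, establish the moment bound inductively across consecutive grid intervals $[t_i, t_{i+1}]$ on each of which the coefficients are $\mathcal{F}_{t_i}$-measurable and hence frozen.

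The main technical obstacle is a bookkeeping one: all constants appearing in the Young and BDG steps must be tracked to ensure that the final bound is uniform in $n$ and, in the case of $X$, independent of the localising level $R$. This is standard but must be handled carefully so that A-1 (which provides only $p$-th moment control of $M$ and $M_n$, not pathwise boundedness) and A-5 (which gives only $p$-th moment control of the initial data) are the only stochastic inputs used, so that the bound $K$ depends only on $p$, $T$, and $L$ (together with the constants $c$ and $N$ supplied by A-1) as claimed.
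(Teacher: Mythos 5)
Your proposal is correct and reaches the same Gronwall-type estimate as the paper, but the mechanics differ enough to be worth noting. The paper does not apply It\^o's formula to $|X(t)|^p$ at all: it simply raises the integral representation $X(t)=X(t_0)+\int b\,ds+\int\sigma\,dW$ to the power $p$, uses H\"older's inequality on the drift integral and the Burkholder--Davis--Gundy inequality in the form $\mathbb E\bigl[\sup_{t\le u}\bigl|\int_{t_0}^{t}\sigma\,dW\bigr|^p\bigr]\le \bar c\,\mathbb E\bigl[\int_{t_0}^{u}|\sigma|^p\,ds\bigr]$, and then invokes A-1, A-5 and Gronwall. This avoids both the absorption step ($\sqrt{ab}\le \epsilon a+b/(4\epsilon)$) and the need to restrict to even integers $p$; your route via It\^o's formula works equally well but is slightly heavier. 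Two small points in your favour: you localise with stopping times to guarantee a priori finiteness of $\mathbb E\bigl[\sup_{t_0\le u\le t}|X(u)|^p\bigr]$ before applying Gronwall and then pass to the limit by Fatou, a step the paper glosses over; and your treatment of the Euler scheme via $|X_n(\kappa_n(s))|^p\le\sup_{t_0\le u\le s}|X_n(u)|^p$ and the $n$-uniformity of the constants from A-1 and A-5 matches the paper's (unwritten) argument exactly. Your reduction of the case $0<p<2$ by Jensen's inequality is the same as the paper's closing application of H\"older's inequality.
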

\begin{proof}
First one  chooses any $p > 2$. Then  for the true solution $\{X(t)\}_{t \in [t_0, t_1]}$, by using H\"older's inequality, one obtains,
\begin{eqnarray}
|X(t)|^p &\leq & 3^{p-1}\Bigg\{ |X(t_0)|^p+|t-t_0|^{p-1} \int_{t_0}^{t} |b(s, X(s))|^p ds + \Bigg| \int_{t_0}^{t}\sigma(s,X(s))dW(s) \Bigg|^p \Bigg\}. \notag
\end{eqnarray}
Thus on taking supremum over $[t_0, u] $ for some $u \in [t_0, t_1]$ and by applying Burkholder-Davis-Gundy inequality with constant $\bar c:=\bar c(p)>0$, one  observes that
\begin{eqnarray}
\mathbb E\bigg[\sup_{t_0 \leq t \leq u}|X(t)|^p \bigg] \leq  3^{p-1}\bigg\{ \mathbb E[|X(t_0)|^p]+ T^{p-1} \mathbb E \bigg[\int_{t_0}^{u} |b(s, X(s))|^p ds\bigg] + \bar c \mathbb E\bigg[ \int_{t_0}^{u}|\sigma(s,X(s))|^p ds \bigg] \bigg\}. \notag
\end{eqnarray}
Now A-1 and A-5 give
\begin{eqnarray}
\mathbb E\bigg[\sup_{t_0 \leq t \leq u}|X(t)|^{p} \bigg] &\leq &  2^{p-1}\bigg\{ \mathbb E[|X(t_0)|^p]+ \big[T^{p-1}+\bar c\big] c^p \mathbb E \bigg[\int_{t_0}^{u} \big\{|M| +|X(s)|\big\}^{p} ds\bigg]  \bigg\} \notag
\\
&  \leq &  3^{p-1}\bigg\{ \mathbb E[|X(t_0)|^p] +  2^{p-1}\big[T^{p-1}+\bar c\big]c^p T N \notag
\\
&& \qquad + 2^{p-1}\big[T^{p-1}+\bar c\big]c^p  \int_{t_0}^{u} \mathbb E \bigg[\sup_{t_0 \leq r\leq s}|X(r)|^p \bigg] ds  \bigg\} \notag
\end{eqnarray}
which on the application of Gronwall's inequality yields
\begin{eqnarray}
\mathbb E\bigg[\sup_{t_0 \leq t \leq t_1}|X(t)|^p \bigg] < 3^{p-1}\big\{ \mathbb E[|X(t_0)|^p] +  2^{p-1}\big[T^{p-1}+\bar c\big]c^p T N \big\}e^{6^{p-1}(T^{p-1}+\bar c)c^pT}. \notag
\end{eqnarray}
For any $p>2$, we adopt similar arguments for $\{X_n(t)\}_{\{t \in [t_0, t_1]\}}$ in order to prove \eqref{hash} with the remark that A-5 guarantees that $K$ does not depend on $n$. Then one observes that the application  of H\"older's inequality completes the proof.
\end{proof}
For every $R>0$ and $n \geq 1$, let us define the stopping times,
\begin{eqnarray}
\tau_R:=\inf\{t \geq t_0: |X(t)| \geq R\}, \quad \sigma_{nR}:=\inf\{t \geq t_0:|X_n(t)| \geq R\} \quad  \text{and} \quad  \nu_{nR}:=\tau_R  \wedge \sigma_{nR}, \label{nmc}
\end{eqnarray}
where $\inf \emptyset = \infty.$

We  prove below a very useful lemma of this article as it is used in the penultimate section in order to recover the rate of convergence.
\begin{lemma}
\label{l4}
Let us assume that A-1 and A-5 hold. Then
\begin{eqnarray}
\label{lem2}
\lim_{n \rightarrow \infty}\mathbb E \Bigg[ \int_{t_0}^{t_1 }|X_n(s \wedge \nu_{nR})-X_n(\kappa_n(s \wedge \nu_{nR}))|^p ds \Bigg] =0 \notag
\end{eqnarray}
for every $p > 0$.
\end{lemma}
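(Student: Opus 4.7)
The plan is to split the stopped-time evaluation into a deterministic-time part and a stopping-time part, and to bound each via a Burkholder-Davis-Gundy (BDG) estimate. By Fubini the quantity of interest is $\int_{t_0}^{t_1}\mathbb E|X_n(s\wedge\nu_{nR})-X_n(\kappa_n(s\wedge\nu_{nR}))|^p\,ds$, and partitioning the sample space according to whether $\nu_{nR}\ge s$ or $\nu_{nR}<s$ yields the pointwise inequality
\begin{equation*}
\bigl|X_n(s\wedge\nu_{nR})-X_n(\kappa_n(s\wedge\nu_{nR}))\bigr|^p \le \bigl|X_n(s)-X_n(\kappa_n(s))\bigr|^p + \bigl|X_n(\nu_{nR})-X_n(\kappa_n(\nu_{nR}))\bigr|^p\mathbbm{1}_{\{\nu_{nR}\le t_1\}}.
\end{equation*}
The Euler scheme \eqref{em} expresses each such difference as a drift integral plus a stochastic integral over an interval of length at most $1/n$, with integrand frozen at the left-endpoint value of $X_n(\kappa_n(\cdot))$.

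For the first summand, at a fixed deterministic $s$, H\"older's inequality for the drift and BDG for the diffusion, combined with A-1 (dominating $b_n,\sigma_n$ by $M_n+|X_n(\kappa_n(s))|$) and Lemma \ref{bbnn} (uniformly bounding the relevant moments in $n$), yield $\mathbb E|X_n(s)-X_n(\kappa_n(s))|^p\le Cn^{-p/2}$ uniformly in $s\in[t_0,t_1]$; integration in $s$ then sends this contribution to zero (the range $0<p<2$ follows from $p=2$ by Jensen).

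The second summand is where the only real difficulty lies: $\kappa_n(\nu_{nR})$ is generally not a stopping time, so BDG cannot be applied directly on $[\kappa_n(\nu_{nR}),\nu_{nR}]$. The plan here is to decompose along the grid. With $t_i:=t_0+i/n$, $B_i:=\{\nu_{nR}\in[t_i,t_{i+1})\}$ and $\tilde\nu_i:=\nu_{nR}\wedge t_{i+1}$ (a bounded stopping time), on $B_i$ one has $X_n(\nu_{nR})-X_n(\kappa_n(\nu_{nR}))=X_n(\tilde\nu_i)-X_n(t_i)=\int_{t_i}^{\tilde\nu_i}b_n(r,X_n(t_i))\,dr+\int_{t_i}^{\tilde\nu_i}\sigma_n(r,X_n(t_i))\,dW(r)$, to which BDG applies cleanly and (using A-1 and Lemma \ref{bbnn}) yields $\mathbb E|X_n(\tilde\nu_i)-X_n(t_i)|^{2p}\le Cn^{-p}$. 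Cauchy-Schwarz together with the disjointness $\sum_i\mathbb P(B_i)\le 1$ then produces
\begin{equation*}
\mathbb E\bigl[|X_n(\nu_{nR})-X_n(\kappa_n(\nu_{nR}))|^p\mathbbm{1}_{\{\nu_{nR}\le t_1\}}\bigr]\le\sum_i\bigl(\mathbb E|X_n(\tilde\nu_i)-X_n(t_i)|^{2p}\bigr)^{1/2}\bigl(\mathbb P(B_i)\bigr)^{1/2}\le Cn^{-(p-1)/2},
\end{equation*}
via $\sum_i(\mathbb P(B_i))^{1/2}\le(\sum_i 1)^{1/2}(\sum_i\mathbb P(B_i))^{1/2}\le C\sqrt n$; this vanishes for $p>1$, and the range $0<p\le 1$ follows from $p=2$ by H\"older. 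The main obstacle is precisely this loss of the stopping-time property under $\kappa_n$, resolved by the grid decomposition at the cost of an extra $\sqrt n$ factor absorbed by the higher-moment BDG estimate.
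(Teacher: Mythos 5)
Your proof is correct and establishes the stated limit, but it handles the one genuinely delicate point differently from the paper. Both arguments share the same core ingredients (the Euler increment written as a drift plus a stochastic integral over an interval of length at most $1/n$, H\"older, Burkholder--Davis--Gundy, A-1 and Lemma \ref{bbnn}), yet they diverge on the stochastic integral over $[\kappa_n(s\wedge\nu_{nR}),\,s\wedge\nu_{nR}]$. The paper keeps the stopped time throughout and rewrites that integral as $\int_{\kappa_n(s)}^{s}\mathbbm{1}_{[\kappa_n(s\wedge\nu_{nR}),\,s\wedge\nu_{nR}]}(r)\,\sigma_n(r,X_n(\kappa_n(r)))\,dW(r)$, applying the $L^p$ moment inequality over the deterministic interval $[\kappa_n(s),s]$; this yields the bound $\mathcal O(n^{-p/2})$ recorded in \eqref{four4}. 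You instead split on $\{\nu_{nR}\ge s\}$ versus $\{\nu_{nR}<s\}$ and, for the latter, decompose over grid cells so that BDG is only ever invoked on $[t_i,\nu_{nR}\wedge t_{i+1}]$, whose left endpoint is deterministic --- a cleaner way of confronting the fact that $\kappa_n(\nu_{nR})$ is not a stopping time, an issue the paper's indicator identity treats rather tersely. What your route costs is the factor $\sqrt n$ coming from $\sum_i(\mathbb P(B_i))^{1/2}$: you obtain only $\mathcal O(n^{-(p-1)/2})$ for the stopped contribution instead of $\mathcal O(n^{-p/2})$. This is immaterial for the lemma as stated, but the paper later feeds the sharper estimate \eqref{four4} into the last term of \eqref{*p*1} to extract the exact rate $1/4$ in Corollary \ref{pk1}; with your bound one would recover only rate $1/4-\varepsilon$ (by passing to higher moments), so to support the rate results you would need to tighten the Cauchy--Schwarz step or revert to an estimate at the fixed time $s$.
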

\begin{proof}
In order to prove the result, one chooses $p \geq 2$.  Then one immediately writes,
\begin{eqnarray}
&&\mathbb E \bigg[ \int_{t_0}^{t_1}|X_n(s \wedge \nu_{nR})-X_n(\kappa_n(s \wedge \nu_{nR}))|^p ds \bigg] = \mathbb E\bigg[ \int_{t_0}^{t_1}\bigg| \int_{\kappa_n(s \wedge \nu_{nR})}^{s \wedge \nu_{nR}}b_n(r, X_n(\kappa_n(r)))dr \notag
\\
&& \qquad \qquad \qquad  \qquad  +\int_{\kappa_n(s \wedge \nu_{nR})}^{s \wedge \nu_{nR}}\sigma_n(r, X_n(\kappa_n(r)))dW(r) \bigg|^p ds \bigg] \notag
\end{eqnarray}
which implies on applying H\"older's inequality
\begin{eqnarray}
\lefteqn{ \mathbb E \bigg[ \int_{t_0}^{t_1}|X_n(s \wedge \nu_{nR})-X_n(\kappa_n(s \wedge \nu_{nR}))|^p ds \bigg]  } \notag
\\
&& \leq 2^{p-1} \mathbb E\bigg[ \int_{t_0}^{t_1} |s \wedge \nu_{nR}-\kappa_n(s \wedge \nu_{nR})|^{p-1} \bigg(\int_{\kappa_n(s \wedge \nu_{nR})}^{s \wedge \nu_{nR}}|b_n(r, X_n(\kappa_n(r)))|^p dr\bigg)ds\bigg] \notag
\\
&& \qquad \qquad +2^{p-1} \int_{t_0}^{t_1} \mathbb E\bigg[\bigg |\int_{\kappa_n(s \wedge \nu_{nR})}^{s \wedge \nu_{nR}}\sigma_n(r, X_n(\kappa_n(r)))dW(r) \bigg|^p\bigg]  ds  \notag
\\
&&  \leq \Big(\frac{2}{n}\Big)^{p-1} \int_{t_0}^{t_1}  \mathbb E\bigg[\int_{\kappa_n(s \wedge \nu_{nR})}^{s \wedge \nu_{nR}} |b_n(r, X_n(\kappa_n(r)))|^p dr\bigg]ds  \notag
\\
&& \qquad \qquad + 2^{p-1} \int_{t_0}^{t_1} \mathbb E\bigg[\bigg |\int_{\kappa_n(s \wedge \nu_{nR})}^{s \wedge \nu_{nR}}\sigma_n(r, X_n(\kappa_n(r)))dW(r) \bigg|^p\bigg]  ds.  \label{g1}
\end{eqnarray}
One finds bounds for the integrand of the first term of \eqref{g1} as follows,
\begin{eqnarray}
\lefteqn{\mathbb E\bigg[\int_{\kappa_n(s \wedge \nu_{nR})}^{s \wedge \nu_{nR}} |b_n(r, X_n(\kappa_n(r)))|^p dr\bigg]  \leq  c^p \mathbb E\bigg[\int_{\kappa_n(s \wedge \nu_{nR})}^{s \wedge \nu_{nR}}(|M_n|+|X_n(\kappa_n(r))|)^p dr \bigg]}  \notag
\\
& \leq & c^p 2^{p-1} \mathbb E\bigg[\int_{\kappa_n(s \wedge \nu_{nR})}^{s \wedge \nu_{nR}}(|M_n|^p+|X_n(\kappa_n(r))|^p) dr\bigg]  \notag
\\
& \leq & c^p 2^{p-1} \mathbb E\bigg[\Big(|M_n|^p+\sup_{t_0 \leq r \leq t_1}|X_n(r)|^p\Big)  |s \wedge \nu_{nR}-\kappa_n(s \wedge \nu_{nR})|\bigg] \notag
\\
& \leq & \frac{c^p 2^{p-1}}{n} \Big(\mathbb E\big[|M_n|^p\big]+\mathbb E\bigg[\sup_{t_0 \leq r \leq t_1}|X_n(r)|^p\bigg]\bigg)   \notag
\\
& \leq & \frac{c^p 2^{p-1}}{n} (K+N). \qquad \mbox{(using Lemma \ref{bbnn})}\label{g2}
\end{eqnarray}
For the second term of \eqref{g1}, one writes,
\begin{eqnarray}
&& \hspace{-5mm} \mathbb E\bigg[\bigg |\int_{\kappa_n(s \wedge \nu_{nR})}^{s \wedge \nu_{nR}}\sigma_n(r, X_n(\kappa_n(r)))dW(r) \bigg|^p\bigg] = \mathbb E\bigg[\bigg |\int_{\kappa_n(s)}^{s} \mathbbm{1}_{[\kappa_n(s \wedge \nu_{nR}), s \wedge \nu_{nR}]}\sigma_n(r, X_n(\kappa_n(r)))dW(r) \bigg|^p\bigg] \notag
\\
 && \hspace{10mm} \leq \bigg(\frac{p(p-1)}{2} \bigg)^{p/2}|s-\kappa_n(s)|^{(p-2)/2}\mathbb E\bigg[\int_{\kappa_n(s)}^{s} \mathbbm{1}_{[\kappa_n(s \wedge \nu_{nR}), s \wedge \nu_{nR}]} |\sigma_n(r, X_n(\kappa_n(r)))|^p dr\bigg]  \notag
\end{eqnarray}
which on the application of A-1 and Lemma \ref{bbnn} yields
\begin{eqnarray}
 && \mathbb E\bigg[\bigg |\int_{\kappa_n(s \wedge \nu_{nR})}^{s \wedge \nu_{nR}}\sigma_n(r, X_n(\kappa_n(r)))dW(r) \bigg|^p\bigg]  \leq \bigg(\frac{p(p-1)}{2} \bigg)^{p/2} \frac{1}{n^{p/2}} c^p 2^{p-1}  (K+N). \label{g3}
\end{eqnarray}
Substituting \eqref{g2} and \eqref{g3} in \eqref{g1} yields
\begin{eqnarray}
\mathbb E \bigg[ \int_{t_0}^{t_1}|X_n(s \wedge \nu_{nR})-X_n(\kappa_n(s \wedge \nu_{nR}))|^p ds \bigg] &\leq & \frac{T(K+N)4^{p-1}c^p}{n^{p/2}} \bigg\{\frac{1}{n^{p/2}} + \bigg(\frac{p(p-1)}{2} \bigg)^{p/2} \bigg\} \notag
\\
&\leq & \mathcal O(n^{-p/2}). \label{four4}
\end{eqnarray}
Then by letting $n \rightarrow \infty$ on both sides, one obtains
\begin{eqnarray}
&&\lim_{n \rightarrow \infty} \mathbb E \Bigg[ \int_{t_0}^{t_1}|X_n(s \wedge \nu_{nR})-X_n(\kappa_n(s \wedge \nu_{nR}))|^p ds \Bigg] =0 \notag
\end{eqnarray}
for all $p \geq 2$. Then one observes that the application of H\"older's and Jensen's inequalities completes the proof.
\end{proof}
In order to prove the following theorem, a similar approach to the one adopted by \cite{highammaistaurt} for SDEs is followed here, but in a more general context.
%\begin{comment}
%**********************************************
%\begin{lemma}
%\label{uv}
%For $u, v, \eta , q, p >0$ and $q>p$, the following holds
%\begin{eqnarray}
%uv \leq \frac{\eta p}{q} u^{q/p}+\frac{q-p}{q}v^{q/(q-p)} \frac{1}{\eta^{p/(q-p)}}. \notag
%\end{eqnarray}
%\end{lemma}
%\begin{proof}
%From the Young's inequality, we know that for any $a, b >0$ and $p, q >1$
%\begin{eqnarray}
%ab \leq \frac{a^p}{p}+\frac{b^q}{q} \notag
%\end{eqnarray}
%where $\frac{1}{p}+\frac{1}{q}=1$. Then by taking $p=\mu$, $q=\nu$, $a=\eta^{1/\mu} u$ and $b=\frac{v}{\eta^{1/\mu}}$, we get
%\begin{eqnarray}
%\eta^{1/\mu} u  \frac{v}{\eta^{1/\mu}} \leq \frac{\eta u^\mu}{\mu}+\frac{v^\nu}{\nu \eta^{\nu/\mu}} \quad \Rightarrow uv \leq \frac{\eta u^\mu}{\mu}+\frac{v^\nu}{\nu \eta^{\nu/\mu}} \notag
%\end{eqnarray}
%where $\frac{1}{\mu}+\frac{1}{\nu}=1$. Further for $q>p$, by taking $\mu=\frac{q}{p}$ and $\nu=\frac{q}{q-p}$, we prove the inequality.
%\end{proof}
%*******************************************************
%\end{comment}
\begin{thm}
\label{**}
Let A-1 to A-5 hold and  suppose there  exists a unique solution $\{X_n(t)\}_{\{t_0 \leq t \leq t_1\}}$ of the Euler  scheme \eqref{em}. Then, the Euler  scheme \eqref{em} converges to the SDE  \eqref{sderc} in $\mathcal L^p$-sense, i.e.
\begin{eqnarray}
\lim_{n \rightarrow \infty} \mathbb E\Bigg[ \sup_{t_0 \leq t \leq t_1}|X(t)-X_n(t)|^p \Bigg]=0 \label{uu}
\end{eqnarray}
for all $p > 0$.
\end{thm}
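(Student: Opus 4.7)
The proposed proof proceeds by localisation through the stopping times $\nu_{nR}$ defined in \eqref{nmc}, in the spirit of \cite{highammaistaurt}. Setting $e_n(t) := X(t) - X_n(t)$, I first split
\begin{equation*}
\mathbb{E}\Big[\sup_{t_0 \le t \le t_1} |e_n(t)|^p\Big] \le \mathbb{E}\Big[\sup_{t_0 \le t \le t_1} |e_n(t \wedge \nu_{nR})|^p\Big] + \mathbb{E}\Big[\sup_{t_0 \le t \le t_1} |e_n(t)|^p \, \mathbbm{1}_{\{\nu_{nR} \le t_1\}}\Big].
\end{equation*}
For the second summand, H\"older's inequality gives the upper bound $\bigl(\mathbb{E}[\sup_t |e_n(t)|^{2p}]\bigr)^{1/2} \, \mathbb{P}(\nu_{nR} \le t_1)^{1/2}$, where the first factor is controlled uniformly in $n$ by Lemma \ref{bbnn} (which uses A-1 and A-5), while Markov's inequality together with Lemma \ref{bbnn} gives $\mathbb{P}(\nu_{nR} \le t_1) \le 2K/R^p$. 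Hence this contribution vanishes as $R \to \infty$, uniformly in $n$. The plan is then to prove, for each fixed $R$, that $\mathbb{E}[\sup_t |e_n(t \wedge \nu_{nR})|^p] \to 0$ as $n \to \infty$, and to conclude by sending $n \to \infty$ with $R$ fixed and then letting $R \to \infty$.

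For the stopped analysis, I would first handle $p \ge 2$ (the range $0 < p < 2$ follows from Jensen's inequality at the end), applying It\^o's formula to $|e_n(t \wedge \nu_{nR})|^p$ and employing the decomposition
\begin{equation*}
b(s, X(s)) - b_n(s, X_n(\kappa_n(s))) = \bigl[b(s, X(s)) - b(s, X_n(\kappa_n(s)))\bigr] + \bigl[b(s, X_n(\kappa_n(s))) - b_n(s, X_n(\kappa_n(s)))\bigr],
\end{equation*}
and an analogous one for $\sigma - \sigma_n$. Since $|X(s)|, |X_n(s)|, |X_n(\kappa_n(s))| < R$ on $\{s \le \nu_{nR}\}$, the one-sided Lipschitz bound in A-2 (respectively the Lipschitz bound on $\sigma$) controls the first bracket after pairing with $X(s) - X_n(\kappa_n(s)) = e_n(s) - [X_n(s) - X_n(\kappa_n(s))]$, producing a term of the form $C_R |e_n|^2$ plus a remainder involving $|X_n - X_n(\kappa_n)|$. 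The second bracket is handled by the Cauchy--Schwarz inequality and the uniform-on-compacts convergence in A-3 (noting $|X_n(\kappa_n(s))| < R$). Treating the diffusion analogously via BDG and $|\sigma - \sigma_n|^2$, and bounding the residual gap terms by the linear growth A-1 multiplied by the $\mathcal{O}(n^{-1/2})$ estimate from Lemma \ref{l4}, one obtains an integral inequality to which Gronwall applies; A-4 absorbs the initial-condition term, yielding $\mathbb{E}[\sup_t |e_n(t \wedge \nu_{nR})|^p] \to 0$ for each fixed $R$.

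The principal obstacle is the cross term produced by the time-gap $X_n(s) - X_n(\kappa_n(s))$: since A-2 and A-3 grant no pointwise regularity of $b_n, \sigma_n$ in $x$, this gap cannot be re-absorbed by a Lipschitz estimate on $b_n$ itself. Lemma \ref{l4} is the decisive device, providing the required $\mathcal{L}^p$-smallness of the gap on the stopped interval which, combined with the coarse linear-growth bound A-1 (whose $p$-moments are controlled by the integrability of $M, M_n$ postulated in A-1) and the moment bound of Lemma \ref{bbnn}, renders this residual harmless. With that in hand, Gronwall closes the stopped estimate, and the localisation splitting from the first paragraph, together with Jensen's inequality to pass from $p \ge 2$ to arbitrary $p > 0$, completes the proof.
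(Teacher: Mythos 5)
Your overall architecture coincides with the paper's: localisation through $\nu_{nR}$, a tail term controlled by Lemma \ref{bbnn} and Markov's inequality, an It\^o/Gronwall estimate on the stopped error in which the drift is decomposed so that A-2 acts on $b(s,X(s))-b(s,X_n(\kappa_n(s)))$ paired with $X(s)-X_n(\kappa_n(s))$, the perturbation $b-b_n$ is handled by A-3, and the time-gap $X_n(s)-X_n(\kappa_n(s))$ is absorbed via Lemma \ref{l4} and A-1. You have also correctly identified the cross term as the crux. (The paper uses Young's inequality rather than Cauchy--Schwarz for the tail term, and works with $p>4$ rather than $p\ge 2$ because it applies It\^o to $|e_n|^2$ and then raises to the power $p/2$ before invoking Burkholder--Davis--Gundy; these are cosmetic differences.)

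There is, however, one genuine gap: you never address the fact that the local monotonicity constant $C_R$ in A-2 is a \emph{random variable} ($\mathcal F_{t_0}$-measurable), for which the hypothesis supplies no moment bound whatsoever --- only the tail condition $\lim_{R\to\infty}\mathbb P(C_R>f(R))=0$. Your stopped estimate $\mathbb E\big[\sup_t|e_n(t\wedge\nu_{nR})|^p\big]$ carries no restriction on $C_R$, so the integral inequality you derive contains terms of the form $\mathbb E\big[C_R^{p/2}\sup_r|e_n(r\wedge\nu_{nR})|^p\big]$, and Gronwall's lemma (applied either to the unconditional expectation or, after conditioning on $\mathcal F_{t_0}$, pathwise in $C_R$) produces a factor such as $e^{c\,C_R^{p/2}T}$ whose expectation need not be finite. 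The argument therefore does not close as written. The remedy --- and the reason A-2 postulates the function $f$ at all --- is to split further on $\{C_R\le f(R)\}$: carry the indicator $\mathbbm 1_{\{C_R\le f(R)\}}$ through the entire stopped estimate, replace $C_R$ by the deterministic constant $f(R)$ there so that Gronwall yields a constant depending only on $f(R)$, and absorb the complementary event into your tail term, which then reads
\begin{equation*}
\mathbb E\Big[\sup_{t_0\le t\le t_1}|e_n(t)|^p\,\mathbbm 1_{\{\nu_{nR}\le t_1\ \text{or}\ C_R>f(R)\}}\Big]
\le \Big(\mathbb E\Big[\sup_{t_0\le t\le t_1}|e_n(t)|^{2p}\Big]\Big)^{1/2}\Big(\tfrac{2K}{R^p}+\mathbb P(C_R>f(R))\Big)^{1/2},
\end{equation*}
which still vanishes as $R\to\infty$ uniformly in $n$ by A-2. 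With this modification your proof matches the paper's; without it, the Gronwall step is unjustified.
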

\begin{proof} For every $R>0$ and $n \geq 1$, we consider   stopping times as defined in \eqref{nmc}.
%\begin{comment}
%*******************************************************
%\begin{eqnarray}
%\tau_R:=\inf\{t \geq t_0: |X(t)| \geq R\}, \qquad \sigma_{nR}:=\inf\{t \geq t_0:|X_n(t)| \geq R\} \qquad  \text{and} \qquad  \nu_{nR}:=\tau_R  \wedge \sigma_{nR}, \notag
%\end{eqnarray}
%where $\inf \emptyset = \infty.$
%**************************
%\end{comment}
First we fix $p>4$. Then
\begin{eqnarray}
\mathbb E\Bigg[\sup_{t_0 \leq t \leq t_1}|X(t)-X_n(t)|^p \Bigg] & \leq & \mathbb E \Bigg[ \sup_{t_0 \leq t \leq t_1}|X(t)-X_n(t)|^p \mathbbm{1}_{\{\tau_R \leq t_1 \, \, \text{or} \, \, \sigma_{nR} \leq t_1 \, \, \text{or} \, \, C_R >f(R) \}}\Bigg]  \notag
\\
&& \hspace{-20mm}+ \mathbb E \Bigg[ \sup_{t_0 \leq t \leq t_1}|X(t \wedge \nu_{nR})-X_n(t \wedge \nu_{nR)}|^p \mathbbm{1}_{\{C_R \leq f(R)\}}\Bigg]. \label{one}
\end{eqnarray}
To estimate the first term of \eqref{one}, one can use Young's inequality for $q > p $ $(1/p+1/q=1)$ and $\eta>0$ to obtain the following.
\begin{eqnarray}
\lefteqn{\mathbb E \Bigg[ \sup_{t_0 \leq t \leq t_1}|X(t)-X_n(t)|^p \mathbbm{1}_{\{\tau_R \leq t_1 \, \, \text{or} \, \, \sigma_{nR} \leq t_1 \, \, \text{or} \, \, C_R > f(R) \}}\Bigg]   \leq   \frac{ \eta p }{q}\mathbb E \Bigg[ \sup_{t_0 \leq t \leq t_1}|X(t)-X_n(t)|^q \Bigg]}\notag
\\
&&\qquad \qquad \qquad \qquad \qquad \qquad +\frac{q-p}{q \eta^{p/(q-p)}} \mathbb P(\tau_R \leq t_1 \, \text{or} \, \sigma_{nR} \leq t_1 \, \, \text{or} \, \, C_R > f(R)  ) \notag
\\
& \leq & \frac{ \eta p}{q} 2^{q-1} \Bigg\{\mathbb E \Bigg[ \sup_{t_0 \leq t \leq t_1} |X(t)|^q\Bigg]+ \mathbb E \Bigg[ \sup_{t_0 \leq t \leq t_1} |X_n(t)|^q\Bigg]\Bigg\} \notag
\\
 && \qquad \qquad  \qquad  \qquad  \qquad +\frac{q-p}{q \eta^{p/(q-p)}} \Big \{\mathbb P(\tau_R \leq t_1) + \mathbb P(\sigma_{nR} \leq t_1)+ \mathbb P(C_R > f(R))\Big\} \notag
\\
&  \leq & \frac{ \eta p}{q} 2^{q} K   + \frac{q-p}{q \eta^{p/(q-p)}} \Bigg \{\mathbb E\Bigg[\frac{|X(\tau_R)|^p}{R^p}\Bigg] + \mathbb E\Bigg[\frac{|X_n(\sigma_{nR})|^p}{R^p}\Bigg]+\mathbb P(C_R > f(R))\Bigg\}, \notag
\end{eqnarray}
which becomes
\begin{eqnarray}
&& \mathbb E \Bigg[ \sup_{t_0 \leq t \leq t_1}|X(t)-X_n(t)|^p \mathbbm{1}_{\{\tau_R \leq t_1 \, \, \text{or} \, \, \sigma_{nR} \leq t_1 \, \, \text{or} \, \, C_R > f(R) \}}\Bigg]  \notag
\\
&& \hspace{35mm} \leq  \frac{ \eta p}{q} 2^{q} K  + \frac{q-p}{q \eta^{p/(q-p)}} \Bigg\{\frac{2K}{R^p}+\mathbb P(C_R > f(R))\Bigg\} . \label{two2}
\end{eqnarray}
Further for estimating the second term of \eqref{one}, let $e_n(s):=X(s)-X_n(s)$ for $s \in [t_0, t_1 ]$, then one could write
\begin{eqnarray}
de_n(s)%&=& dX(s)-dX_n(s) \notag
%\\
%&=& \{b(s, X(s))-b_n(s,X_n(\kappa_n(s)))\} ds + \{\sigma(s,X(s))-\sigma_n(s,X_n(\kappa_n(s)))\}dW(s)  \notag
%\\
&=& \bar b_n(s)ds+ \bar \sigma_n(s) dW(s) \notag
\end{eqnarray}
where $\bar b_n(s):= b(s, X(s))-b_n(s,X_n(\kappa_n(s))) $ and $\bar \sigma_n(s):=\sigma(s,X(s))-\sigma_n(s,X_n(\kappa_n(s)))$. Thus, on the application of It$\hat{\text{o}}$'s formula, one obtains
\begin{eqnarray}
 d|e_n(s)|^2 %&=&2 \langle e_n(s), de_n(s) \rangle + | de_n(s) |^2 \notag
%\\
%&=& 2 \langle e_n(s), \bar b_n(s) \rangle ds + 2 \langle e_n(s), \bar \sigma_n(s) dW(s) \rangle + |\bar \sigma_n(s)|^2 ds \notag
%\\
&=& 2 \langle e_n(s), \bar b_n(s) \rangle ds + \sum_{i=1}^{m}|\bar \sigma^i_n(s)|^2 ds + 2 \sum_{i=1}^{m} \langle e_n(s), \bar \sigma^i_n(s) \rangle  dW^i(s)  \notag
\end{eqnarray}
which on integrating over $s \in [t_0, t \wedge \nu_{nR}]$ for some $t \in [t_0, t_1]$ yields
\begin{eqnarray}
 |e_n(t \wedge \nu_{nR})|^2 &=&|e_n(t_0)|^2 + 2 \int_{t_0}^{t \wedge \nu_{nR}}\langle e_n(s), \bar b_n(s) \rangle ds  + \sum_{i=1}^{m} \int_{t_0}^{t \wedge \nu_{nR}} |\bar \sigma^i_n(s)|^2 ds \notag
\\
&& \hspace{10mm} + 2 \sum_{i=1}^{m} \int_{t_0}^{t \wedge \nu_{nR}} \langle e_n(s), \bar \sigma^i_n(s)  \rangle dW^i(s) \notag
\end{eqnarray}
where $\sigma^i_n$ and $W^i$ denote the $i$-th column of $(d \times m)$-matrix $\sigma$ and the $i$-th element of Wiener $(m \times 1)$-vector $W$ respectively for all $i \in \{1, \ldots, m\}$. Further, after raising the power to $p/2$ for some $p>4$ and applying H\"older's and Burkholder-Davis-Gundy  inequalities, one obtains the following,
\begin{eqnarray}
&&  \mathbb E \Bigg[\sup_{t_0 \leq t \leq u}|e_n(t \wedge \nu_{nR})|^p \mathbbm 1_{\{C_R \leq f(R)\}} \Bigg] \leq 2^{p-2}\mathbb E\big[|e_n(t_0)|^p\big] \notag
\\
&& \hspace{15mm} + 2(8T)^{\frac{p-2}{2}} \mathbb E\Bigg[\mathbbm 1_{\{C_R \leq f(R)\}} \int_{t_0}^{u \wedge \nu_{nR}}|\langle e_n(s), \bar b_n(s) \rangle |^{\frac{p}{2}} ds \Bigg]  \notag
\\
&& \hspace{25mm} + (4mT)^{\frac{p-2}{2}} \sum_{i=1}^{m} \mathbb E \Bigg[\mathbbm 1_{\{C_R \leq f(R)\}}\int_{t_0}^{u \wedge \nu_{nR}} |\bar \sigma^i_n(s)|^p ds \Bigg] \notag
\\
&& \hspace{35mm} + 2^{\frac{3p-4}{2}} m^{\frac{p-2}{2}} T^{\frac{p-4}{4}}\bar {\bar c } \sum_{i=1}^{m} \mathbb E \Bigg[\mathbbm 1_{\{C_R \leq f(R)\}} \int_{t_0}^{u \wedge \nu_{nR}} |\langle e_n(s), \bar \sigma^i_n(s)  \rangle |^{\frac{p}{2}} ds \Bigg] \notag
\\
&&\hspace{15mm} =I+II+III+IV \label{main2}
\end{eqnarray}
where $\bar{\bar c}:=\bar{\bar c}(p)$ is the constant of the Burkholder-Davis-Gundy inequality.
\newline
In order to estimate $II$, one observes that the application of A-2 and Cauchy-Schwarz inequality yields,
\begin{eqnarray}
&&\hspace{-12mm} \langle e_n(s), \bar b_n(s) \rangle %&=& \langle X(s)-X_n(s), b(s, X(s))-b_n(s,X_n(\kappa_n(s))) \rangle \notag
%\\
%&=& \langle X(s)-X_n(\kappa_n(s)), b(s, X(s))-b_n(s,X_n(\kappa_n(s))) \rangle \notag
%\\
%&& + \langle X_n(\kappa_n(s))-X_n(s), b(s, X(s))-b_n(s,X_n(\kappa_n(s))) \rangle \notag
%\\
= \langle X(s)-X_n(\kappa_n(s)), b(s, X(s))-b(s,X_n(\kappa_n(s))) \rangle \notag
\\
&& \hspace{10mm}+ \langle X(s)-X_n(\kappa_n(s)), b(s, X_n(\kappa_n(s)))-b_n(s,X_n(\kappa_n(s))) \rangle \notag
\\
&& \hspace{20mm} + \langle X_n(\kappa_n(s)) - X_n(s), b(s, X(s))-b_n(s,X_n(\kappa_n(s))) \rangle \label{impt}
\\
&& \leq  C_R| X(s)-X_n(\kappa_n(s))|^2 +  | X(s)-X_n(\kappa_n(s))|  |b(s, X_n(\kappa_n(s)))-b_n(s,X_n(\kappa_n(s)))| \notag
\\
&& \hspace{15mm} + | X_n(s)-X_n(\kappa_n(s))||b(s, X(s))-b_n(s,X_n(\kappa_n(s))) | \notag
%&& \langle e_n(s), \bar b_n(s) \rangle %&\leq& \bigg(C_R+\frac{1}{2}\bigg)| X(s)-X_n(\kappa_n(s))|^2 +  \frac{1}{2}|b(s, X_n(\kappa_n(s)))-b_n(s,X_n(\kappa_n(s)))|^2 \notag
%\\
%&& \hspace{20mm} + | X_n(s)-X_n(\kappa_n(s))|c\big\{M+M_n+2R\big\}. \notag
%\\
\end{eqnarray}
and on further application of   Young's inequality  and A-1, this becomes
\begin{eqnarray}
\langle e_n(s), \bar b_n(s) \rangle & \leq &  (2C_R+1)| e_n(s)|^2 + (2C_R+1)|X_n(s)-X_n(\kappa_n(s))|^2 \notag
\\
&& \hspace{10mm} + \frac{1}{2}|b(s, X_n(\kappa_n(s)))-b_n(s,X_n(\kappa_n(s)))|^2  \notag
\\
&& \hspace{15mm} + | X_n(s)-X_n(\kappa_n(s))|\big\{c(|X(s)|+|X(\kappa_n(s))|+M+M_n)\big\}. \notag
\end{eqnarray}
Therefore raising power $p/2$ on both sides, we get
\begin{eqnarray}
\hspace{0mm}|\langle e_n(s), \bar b_n(s) \rangle|^{\frac{p}{2}} & \leq & 2^{p-3}\Big\{2(2C_R+1)^{\frac{p}{2}}| e_n(s)|^p + 2 (2C_R+1)^{\frac{p}{2}} |X_n(s)-X_n(\kappa_n(s))|^p \notag
\\
&& \hspace{2mm} +  |b(s, X_n(\kappa_n(s)))-b_n(s,X_n(\kappa_n(s)))|^p  \notag
\\
&& \hspace{10mm} + 2 | X_n(s)-X_n(\kappa_n(s))|^{\frac{p}{2}} \big\{c(|X(s)|+|X(\kappa_n(s))|+M+M_n)\big\}^{\frac{p}{2}} \Big\} \notag
\end{eqnarray}
for every $s \in [t_0, u \wedge \nu_{nR}]$.
Hence one obtains the following estimate on using H\"older's inequality,
\begin{eqnarray}
 \hspace{-8mm} II &:= & (32T)^{\frac{p-2}{2}} \mathbb E\Bigg[\mathbbm 1_{\{C_R \leq f(R)\}} \int_{t_0}^{u \wedge \nu_{nR}}|\langle e_n(s), \bar b_n(s) \rangle |^{\frac{p}{2}} ds \Bigg] \notag
\\
&  \leq &(32T)^{\frac{p-2}{2}}\Bigg\{ 2(2f(R)+1)^{\frac{p}{2}}  \int_{t_0}^{u } \mathbb E\bigg[  \sup_{t_0 \leq r \leq s}| e_n(r \wedge \nu_{nR})|^p \mathbbm 1_{C_R \leq f(R)} \bigg] ds   \notag
\\
&& \hspace{2mm}+ 2 (2f(R)+1)^{\frac{p}{2}} \mathbb E\Bigg[\int_{t_0}^{t_1 } |X_n(s \wedge \nu_{nR})-X_n(\kappa_n(s \wedge \nu_{nR}))|^p ds \Bigg] \notag
\\
&& \hspace{10mm} +   \mathbb E\Bigg[\int_{t_0}^{t_1}|b(s \wedge \nu_{nR}, X_n(\kappa_n(s \wedge \nu_{nR})))-b_n(s\wedge \nu_{nR},X_n(\kappa_n(s\wedge \nu_{nR})))|^pds \Bigg] \notag
\\
&& \hspace{-5mm} + 2^{p+1/2}Tc^{\frac{p}{2}}\sqrt{\mathbb E\Bigg[\int_{t_0}^{t_1 } | X_n(s \wedge \nu_{nR})-X_n(\kappa_n(s \wedge \nu_{nR}))|^{p} ds \Bigg]} \times \sqrt{K+N} \Bigg\}. \label{II2}
\end{eqnarray}
Now in order to estimate $III$ of \eqref{main2}, one observes that for every $i \in \{1, \ldots, m\}$, the assumption A-2 yields
\begin{eqnarray}
|\bar \sigma^i_n(s)|^p  &=&  |\sigma^i(s, X(s))-\sigma_n^i(s, X_n(\kappa_n(s)))|^p \leq   3^{p-1} \Big\{C_R^{\frac{p}{2}} |e_n(s)|^p +  C_R^{\frac{p}{2}} |X_n(s)-X_n(\kappa_n(s))|^p  \notag
\\
&& \hspace{35mm}  +     |\sigma^i(s, X_n(\kappa_n(s)))-\sigma_n^i(s, X_n(\kappa_n(s)))|^p \Big\}\notag
\end{eqnarray}
which implies
\begin{eqnarray}
III &:= &(4mT)^{\frac{p-2}{2}}\sum_{i=1}^{m}\mathbb E \Bigg[\mathbbm 1_{\{C_R \leq f(R)\}}\int_{t_0}^{u \wedge \nu_{nR}} |\bar \sigma^i_n(s)|^p ds \Bigg] \notag
\\
& \leq & 3^{p-1} (4mT)^{\frac{p-2}{2}} \Bigg\{ m f(R)^{\frac{p}{2}} \int_{t_0}^{u } \mathbb E \Bigg[\sup_{t_0 \leq r \leq s}|e_n(r \wedge \nu_{nR})|^p \mathbbm 1_{\{C_R \leq f(R)\}}\Bigg] ds \notag
\\
&& \hspace{40mm} + m  f(R)^{\frac{p}{2}} \mathbb E \Bigg[\int_{t_0}^{t_1} |X_n(s  \wedge \nu_{nR})-X_n(\kappa_n(s \wedge \nu_{nR} ))|^p ds\Bigg]  \notag
\\
&& \hspace{-10mm}+    \sum_{i=1}^{m} \mathbb E \Bigg[ \int_{t_0}^{t_1} |\sigma^i(s \wedge \nu_{nR} , X_n(\kappa_n(s \wedge \nu_{nR})))-\sigma_n^i(s \wedge \nu_{nR}, X_n(\kappa_n(s \wedge \nu_{nR})))|^p ds \Bigg] \Bigg\}.\label{III2}
\end{eqnarray}
Finally, to estimate $IV$, observes that for $i \in \{1, \ldots, m\}$, an application of assumption A-2 yields
\begin{eqnarray}
|\langle e_n(s), \bar \sigma^i_n(s) \rangle |^{\frac{p}{2}} &\leq & |e_n(s)|^{\frac{p}{2}} |\bar \sigma^i_n(s)  |^{\frac{p}{2}}  \notag
= |e_n(s)|^{\frac{p}{2}} |\sigma^i(s, X(s))-\sigma_n^i(s, X_n(\kappa_n(s)))|^{\frac{p}{2}} \notag
%\\
%& \leq &  3^{\frac{p}{2}-1} \Big\{C_R^{\frac{p}{4}} |e_n(s)|^p +  C_R^{\frac{p}{4}} |e_n(s)|^{\frac{p}{2}}|X_n(s)-X_n(\kappa_n(s))|^{\frac{p}{2}}  \notag
%\\
%&& \hspace{10mm}  +     |e_n(s)|^{\frac{p}{2}} |\sigma^i(s, X_n(\kappa_n(s)))-\sigma_n^i(s, X_n(\kappa_n(s)))|^{\frac{p}{2}} \Big\} \notag
%\\
%& \leq &  3^{\frac{p}{2}-1} C_R^{\frac{p}{4}} |e_n(s)|^p + \frac{3^{\frac{p}{2}-1} C_R^{\frac{p}{4}}}{2} |e_n(s)|^p +\frac{3^{\frac{p}{2}-1} C_R^{\frac{p}{4}}}{2}|X_n(s)-X_n(\kappa_n(s))|^p  \notag
%\\
%&& \hspace{10mm}  +    \frac{3^{\frac{p}{2}-1}}{2} |e_n(s)|^p +  \frac{3^{\frac{p}{2}-1}}{2} |\sigma^i(s, X_n(\kappa_n(s)))-\sigma_n^i(s, X_n(\kappa_n(s)))|^p \notag
%\\
\end{eqnarray}
which by using Young's inequality becomes
\begin{eqnarray}
|\langle e_n(s), \bar \sigma^i_n(s) \rangle |^{\frac{p}{2}}  & \leq &  3^{\frac{p-2}{2}} \Big\{C_R^{\frac{p}{4}} |e_n(s)|^p +  C_R^{\frac{p}{4}} |e_n(s)|^{\frac{p}{2}}|X_n(s)-X_n(\kappa_n(s))|^{\frac{p}{2}}  \notag
\\
&& \hspace{10mm}  +     |e_n(s)|^{\frac{p}{2}} |\sigma^i(s, X_n(\kappa_n(s)))-\sigma_n^i(s, X_n(\kappa_n(s)))|^{\frac{p}{2}} \Big\} \notag
\\
& \leq & \frac{3^{\frac{p-2}{2}}}{2} \Big\{ \Big(3 C_R^{\frac{p}{4}} + 1 \Big) |e_n(s)|^p + C_R^{\frac{p}{4}}|X_n(s)-X_n(\kappa_n(s))|^p \notag
\\
&& \hspace{30mm} +   |\sigma^i(s, X_n(\kappa_n(s)))-\sigma_n^i(s, X_n(\kappa_n(s)))|^p \Big\}. \notag
\end{eqnarray}
Thus
\begin{eqnarray}
IV &:= &2^{\frac{3p-4}{2}} m^{\frac{p-2}{2}} T^{\frac{p-4}{4}}\bar {\bar c }\sum_{i=1}^{m} \mathbb E \Bigg[\mathbbm 1_{\{C_R \leq f(R)\}} \int_{t_0}^{u \wedge \nu_{nR}} |\langle e_n(s), \bar \sigma^i_n(s)  \rangle |^{\frac{p}{2}} ds \Bigg] \notag
\\
& \leq & 2^{\frac{3(p-2)}{2}} 3^{\frac{p-2}{2}} m^{\frac{p-2}{2}} T^{\frac{p-4}{4}} \bar {\bar c }  \Bigg\{m\Big(3 f(R)^{\frac{p}{4}} + 1 \Big)  \int_{t_0}^{u } \mathbb E \Bigg[\sup_{t_0 \leq r \leq s}|e_n(r \wedge \nu_{nR} )|^p \mathbbm 1_{\{C_R \leq f(R)\}}  \Bigg] ds \notag
\\
&& \hspace{20mm} + mf(R)^{\frac{p}{4}} \mathbb E \Bigg[\int_{t_0}^{t_1} |X_n(s \wedge \nu_{nR})-X_n(\kappa_n(s \wedge \nu_{nR}))|^p ds \Bigg] \notag
\\
&& \hspace{0mm} +  \sum_{i=1}^{m} \mathbb E \Bigg[\int_{t_0}^{t_1}|\sigma^i(s\wedge \nu_{nR}, X_n(\kappa_n(s \wedge \nu_{nR})))-\sigma_n^i(s \wedge \nu_{nR}, X_n(\kappa_n(s \wedge \nu_{nR})))|^p ds \Bigg]\Bigg\}. \label{IV2}
\end{eqnarray}
Hence using \eqref{II2}, \eqref{III2} and \eqref{IV2} in \eqref{main2} and then after applying Gronwall's inequality, one obtains
%\begin{comment}
%**************************************************************************
%\begin{eqnarray}
%&&  \hspace{-10mm} \mathbb E \Bigg[\sup_{t_0 \leq t \leq u}|e_n(t \wedge \nu_{nR})|^p \mathbbm 1_{\{C_R \leq f(R)\}} \Bigg] = 2^{p-2}\mathbb E\big[|e_n(t_0)|^p\big] \notag
%\\
%&& \hspace{-5mm} + K\big(p,m,T,f(R)\big)\int_{t_0}^{u } \mathbb E \Bigg[\sup_{t_0 \leq r \leq s}|e_n(r \wedge \nu_{nR} )|^p \mathbbm 1_{\{C_R \leq f(R)\}}  \Bigg] ds \notag
%\\
%&&  + K\big(p,m,T,f(R)\big) \mathbb E \Bigg[\int_{t_0}^{t_1} |X_n(s \wedge \nu_{nR})-X_n(\kappa_n(s \wedge \nu_{nR}))|^p ds \Bigg]  \notag
%\\
%&& \hspace{5mm} + K(p, T, f(R)) \mathbb E\Bigg[\int_{t_0}^{t_1}|b(s \wedge \nu_{nR}, X_n(\kappa_n(s \wedge \nu_{nR})))-b_n(s\wedge \nu_{nR},X_n(\kappa_n(s\wedge \nu_{nR})))|^pds \Bigg] \notag
%\\
%&& \hspace{10mm}+ K(p, m, T) \sum_{i=1}^{m}\mathbb E \Bigg[\int_{t_0}^{t_1}|\sigma^i(s\wedge \nu_{nR}, X_n(\kappa_n(s \wedge \nu_{nR})))-\sigma_n^i(s \wedge \nu_{nR}, X_n(\kappa_n(s \wedge \nu_{nR})))|^p ds \Bigg] \notag
%\\
%&& \hspace{15mm}+ K(p,T) \sqrt{\mathbb E\Bigg[\int_{t_0}^{t_1 } | X_n(s \wedge \nu_{nR})-X_n(\kappa_n(s \wedge \nu_{nR}))|^{p} ds \Bigg]} \sqrt{\mathbb E\big[\big\{K_R(1+M_n)\big\}^p \big]}. \notag
%\end{eqnarray}
%where in the above expression, the constants $K$'s are approriately defined.  Therefore using Gronwall's inequality one obtains the following estimate,
%**********************************************************************
%\end{comment}
\begin{eqnarray}
&&  \hspace{-10mm} \mathbb E \Bigg[\sup_{t_0 \leq t \leq t_1}|e_n(t \wedge \nu_{nR})|^p \mathbbm 1_{\{C_R \leq f(R)\}} \Bigg] \leq e^{\bar K_1 T} \Bigg\{2^{p-2}\mathbb E\big[|e_n(t_0)|^p\big] \notag
\\
&&  + \bar K_2 \mathbb E \Bigg[\int_{t_0}^{t_1} |X_n(s \wedge \nu_{nR})-X_n(\kappa_n(s \wedge \nu_{nR}))|^p ds \Bigg]  \notag
\\
&& \hspace{5mm} + \bar K_3 \mathbb E\Bigg[\int_{t_0}^{t_1}|b(s \wedge \nu_{nR}, X_n(\kappa_n(s \wedge \nu_{nR})))-b_n(s\wedge \nu_{nR},X_n(\kappa_n(s\wedge \nu_{nR})))|^pds \Bigg] \notag
\\
&& \hspace{0mm} + \bar K_5 \sum_{i=1}^{m}\mathbb E \Bigg[\int_{t_0}^{t_1}|\sigma^i(s\wedge \nu_{nR}, X_n(\kappa_n(s \wedge \nu_{nR})))-\sigma_n^i(s \wedge \nu_{nR}, X_n(\kappa_n(s \wedge \nu_{nR})))|^p ds \Bigg] \Bigg\} \notag
\\
&& \hspace{10mm}+  \bar K_4 \sqrt{\mathbb E\Bigg[\int_{t_0}^{t_1 } | X_n(s \wedge \nu_{nR})-X_n(\kappa_n(s \wedge \nu_{nR}))|^{p} ds \Bigg]}   \label{*p*1}
\end{eqnarray}
where constants $\bar K_1, \bar K_2, \bar K_3, \bar K_4$ and $\bar K_5$ are appropriately defined and depend explicitly on $f(R)$ but not on $R$ and they are independent of $n$. Thus one can choose $\eta$ sufficiently small and $R$ sufficiently large such that for $\epsilon>0$ (however small),
\begin{eqnarray}
\frac{ \eta p}{q} 2^{q} K < \frac{\epsilon}{3} ,\qquad  \frac{q-p}{q \eta^{p/(q-p)}} \frac{2K}{R^p} < \frac{\epsilon}{3} \qquad \mbox{and} \qquad \frac{q-p}{q \eta^{p/(q-p)}} \mathbb P(C_R > f(R)) < \frac{\epsilon}{3}. \notag
\end{eqnarray}
Therefore, substituting equations \eqref{two2} and \eqref{*p*1} in \eqref{one} and then using assumption  A-3, A-4 and Lemma \ref{l4}, one obtains \eqref{uu} for $p>4$. The application of H\"older's inequality completes the proof.
\end{proof}
\section{Proof of Main Result}
\label{pruf}
In this section, we shall prove Theorem \ref{mainthm} by considering SDDE \eqref{sdde} as a special case of an SDE with random coefficients. In particular, we set
\begin{eqnarray}
b(t,x):=\beta(t, Y(t), x),  \sigma(t,x):=\alpha(t, Y(t), x), b_n(t,x):=\beta(t, Y_n(t), x), \sigma_n(t,x):=\alpha(t, Y_n(t), x) \label{coff}
\end{eqnarray}
for any $t \in [0, T]$ and $x\in \mathbb R^d$.
%\begin{comment}
%*************************************************************************
%First we prove the following lemma.
%\begin{lemma}
%\label{mbd}
%Suppose C-1 holds, then for all $p \geq  1$,
%\begin{eqnarray}
%\mathbb E\Bigg[\sup_{0 \leq t \leq T}|Z(t)|^p \Bigg]  \vee \sup_{n \geq 1}\mathbb E\Bigg[\sup_{0 \leq t \leq T}|X_n(t)|^p \Bigg] < K \notag
%\end{eqnarray}
%for  some $K>0$.
%\end{lemma}
%\begin{proof}
%First we show that
%\begin{eqnarray}
%\mathbb E\Bigg[\sup_{(i-1)\tau \leq t \leq i\tau}|X(t)|^p \Bigg] < K \label{jj}
%\end{eqnarray}
%for every $i \in \{1, \ldots, N\}$ and for all $p \geq 1$. When $t \in [0, \tau]$ , we regard SDDE \eqref{sdde} as an ordinary SDE  and hence one obtains \eqref{jj} from classical result (see  \cite{highammaistaurt} or \cite{mao1997}). For inductive arguments, we assume that \eqref{jj}   holds for $t \in [(r-1)\tau, r \tau]$. Then for $t \in [r\tau, (r+1)\tau]$, we regard SDDE \eqref{sdde} as SDE \eqref{sderc} with $t_0=r \tau$, $t_1=(r+1) \tau$ and coefficients as given in \eqref{coff}. One observes that  A-1 follows from C-1 with $M(t):=|Y(t)|^l $. A-5 follows from inductive assumption and hence \eqref{jj} holds due to Lemma \ref{bbnn}. One can adopt similar arguments to complete the proof with the remark that initial data $\xi$ does not depend on $n$ .
%\end{proof}
%We are now ready to proceed with the proof of our main result of this article i.e. Theorem \ref{mainthm}.
%***************************************************************************************************
%\end{comment}
\begin{proof}[Proof of theorem \ref{mainthm}.]
We shall prove the result using an induction method so as to show that,
\begin{eqnarray}
\lim_{n \rightarrow \infty}\mathbb E\Bigg[\sup_{(i-1)\tau \leq t \leq i \tau} |X(t)-X_n(t)|^p\Bigg] =0 \label{nanu}
\end{eqnarray}
for every $i \in \{1, \ldots, N\}$ and for all $p >0$.
\newline
\textbf{Case: $\mathbf{t \in [0, \tau]}$.} When $t \in [0, \tau]$, then SDDE \eqref{sdde} and Euler  scheme \eqref{sddeem} become  SDE \eqref{sderc} and scheme \eqref{em} with $t_0=0$, $t_1=\tau$, $X(0)=X_n(0)=\xi(0)$ and coefficients given by \eqref{coff}. Also one observes that  A-1 to A-5 hold due to C-1 to C-3. In particular, A-1 is a consequence of C-1 with $M=M_n=1+\Psi^{l}$, where $\Psi:=\sup_{t \in [0, \tau]}|(\xi(\delta_1(t)), \ldots, \xi(\delta_k(t)))|$, which is uniformly bounded due to the fact that $\xi \in C_{\mathcal F_{0}}^b$. Further,  A-2 is a consequence of C-2  since one observes that $C_R$ is deterministic and in fact $C_R=f(R)$, where $f(R)= K_\Psi \mathbbm{1}_{\{0 < R <\Psi\}} + K_R \mathbbm{1}_{\{ R \geq \Psi\}}$ and  therefore $\mathbb P(C_R > f(R))=0$ for any $R>0$. Finally, A-3, A-4 and A-5 hold trivially. Therefore, for $i=1$, equation \eqref{nanu} holds due to Theorem \ref{**} and Lemma \ref{bbnn} for $t_0=0$ and $t_1=\tau$.

For inductive arguments, we assume that when  $i=r$, i.e. $t \in [(r-1)\tau, r\tau]$ for some $r \in \{1, \ldots, N-1\}$,  equation \eqref{nanu} is satisfied and Lemma \ref{bbnn} holds for $t_0=(r-1)\tau$ and $t_1 = r \tau$.  Then we claim that when $i=r+1$, i.e. $t \in [r\tau, (r+1)\tau]$, equation \eqref{nanu}  holds and Lemma \ref{bbnn} is true  for $t_0=r \tau$ and $t_1=(r+1)\tau$.
\newline
\textbf{Case: $\mathbf{t \in [r \tau, (r+1)\tau]}$.} When $t \in [r\tau, (r+1)\tau]$, then SDDE \eqref{sdde} and its Euler  scheme \eqref{sddeem} become SDE \eqref{sderc} and scheme \eqref{em} with $t_0=r\tau$, $t_1=(r+1)\tau$, $X(t_0)=X(r\tau)$, $X_n(t_0)=X_n(r\tau)$ and coefficients given by \eqref{coff}.
\newline
\textit{Verify A-1.} Consider for $t \in [r\tau, (r+1)\tau]$ and $x \in \mathbb R^d$, then due to C-1,
\begin{eqnarray}
|b(t,x)| + |\sigma(t,x)|  = |\beta(t,Y(t),x)| + |\alpha(t,Y(t), x)|\leq  G(M+|x|) \notag
\end{eqnarray}
and
\begin{eqnarray}
|b_n(t,x)| + |\sigma_n(t,x)|= |\beta(t,Y_n(t), x)| + |\alpha(t,Y_n(t), x)| \leq  G(M_n+|x|) \notag
\end{eqnarray}
where $M:=1+\sup_{r\tau \leq t \leq  (r+1)\tau}|Y(t)|^l$ and $M_n:=1+\sup_{r\tau \leq t \leq (r+1)\tau}|Y_n(t)|^l$ which are bounded in $\mathcal L^p$ for every $p >0$ because of Lemma \ref{bbnn} and the inductive assumptions.
\newline
\textit{Verify A-2.} For every $R>0$, $|x|, |z| \leq R$ and $t \in [r\tau, (r+1)\tau]$, from C-2, one obtains,
\begin{eqnarray}
&& \langle x-z, b(t,x)-b(t,z) \rangle = \langle x-z,\beta(t, Y(t), x)-\beta(t, Y(t), z)\rangle \leq  C_R |x-z|^2 \notag
\\
&& \hspace{20mm} |\sigma(t,x)-\sigma(t,z)|^2 = |\alpha(t, Y(t), x)-\alpha(t, Y(t), z)|^2 \leq  C_R |x-z|^2 \notag
\end{eqnarray}
where the random variable  $C_R$ is given  by
\begin{eqnarray}
C_R:=K_R 1_{\Omega_R} + \sum_{j=R}^{\infty} K_{j+1} 1_{\{\Omega_{j+1} \backslash {\Omega}_{j}\}} \notag
\end{eqnarray}
with $\Omega_j:=\{\omega \in \Omega:\sup_{t \in [r \tau, (r+1) \tau]}|Y(t)| \leq j \}$. One observes that $C_R$ is an $\mathcal F_{r\tau}$-measurable random variable. Further one takes $f(R):=K_R$ for every $R >0$ so that
\begin{eqnarray}
\mathbb P(C_R >f(R)) &=& \mathbb P(C_R >K_R) = 1- \mathbb P(C_R  \leq K_R) \notag
\\
&\leq & 1-\mathbb P(\Omega_R)= \mathbb P\Big(\sup_{r \tau \leq t <(r+1)\tau}|Y(t)|>R\Big) \rightarrow 0 \, \, \mbox{as} \, \, R \rightarrow \infty. \label{f*}
\end{eqnarray}
\textit{Verify A-3.}
For $R>0$ and $|x| \leq R$, one observes that $|Y(t)-Y_n(t)| \stackrel{\mathbb P} \rightarrow 0$ as  $\mathbb E[|Y(t)-Y_n(t)|^p] \rightarrow 0$ for every $p >0$ when $n \rightarrow \infty$. Therefore from C-3,
\begin{eqnarray}
\sup_{|x| \leq R}|\beta(t, Y_n(t), x) - \beta(t, Y(t), x) |^p \stackrel{\mathbb P} \rightarrow 0 \quad \mbox{and} \quad \sup_{|x|\leq R}|\alpha(t, Y_n(t), x) -\alpha(t, Y(t), x)|^p \stackrel{\mathbb P} \rightarrow 0.  \notag
\end{eqnarray}
Furthermore, one observes that sequences $$\bigg\{\sup_{|x|\leq R}|\beta(t, Y_n(t), x) - \beta(t, Y(t), x) |^p\bigg \}_{n\geq 1} \, \mbox{and} \,  \bigg \{\sup_{|x|\leq R}|\alpha(t, Y_n(t), x) -\alpha(t, Y(t), x)|^p\bigg \}_{n \geq 1}$$ are uniformly integrable since they are bounded in $\mathcal L^q$ for any $q >1$,
\begin{eqnarray}
\lefteqn{\mathbb E\bigg[\sup_{|x| \leq R}|\beta(t, Y_n(t), x) - \beta(t, Y(t), x) |^{pq} \bigg]=\mathbb E\bigg[\sup_{|x| \leq R} |\beta(t, Y_n(t), x) - \beta(t, Y(t), x) |^{pq} \bigg]} \notag
\\
&\leq & 2^{pq-1} \mathbb E\bigg[\sup_{|x| \leq R}|\beta(t, Y_n(t), x)|^{pq}\bigg] + 2^{pq-1} \mathbb E\bigg[\sup_{|x| \leq R} |\beta(t, Y(t), x) |^{pq} \bigg] \notag
\\
&\leq & 6^{pq-1} G^{pq} \big\{1+ \mathbb E[|Y_n(t)|^{lpq}]+|R|^{pq}\big\}  + 6^{pq-1} G^{pq} \big\{1+ \mathbb E[|Y(t)|^{lpq}]+|R|^{pq}\big\} \notag
\\
&\leq &  6^{pq-1} G^{pq} 2 \big\{1+ K +|R|^{pq}\big\}  \notag
\end{eqnarray}
and similarly, for the sequence $\{\sup_{|x| \leq R}|\alpha(t, Y_n(t), x) -\alpha(t, Y(t), x)|^p\}_{n \geq 1}$. Therefore,
\begin{eqnarray}
\mathbb E\bigg[\sup_{|x| \leq R}\Big\{|\beta(t, Y_n(t), x) - \beta(t, Y(t), x) |^p + |\alpha(t, Y_n(t), x) -\alpha(t, Y(t), x)|^p\Big\}\bigg] \rightarrow 0 \notag
\end{eqnarray}
as $n \rightarrow \infty$ due to Dominated Convergence Theorem which also yields
\begin{eqnarray}
\lim_{n \rightarrow \infty} \mathbb E\Bigg[\int_{r\tau}^{(r+1)\tau}\sup_{|x| \leq R}\{|b(s,x)-b_n(s,x)|^p + |\sigma(s,x)-\sigma_n(s,x)|^p\}ds\Bigg] =0. \notag
\end{eqnarray}
Thus A-3 is satisfied.
\newline
\textit{Verify A-4.} This holds due to the inductive assumptions.
\newline
\textit{Verify A-5.} This follows due to the inductive assumptions.
\newline
Finally Lemma \ref{bbnn} holds for $t_0=r \tau$ and $t_1=(r+1)\tau$ due to the fact that A-1 holds for $t \in [r \tau, (r+1)\tau]$ and $x \in \mathbb R^d$ and A-5 is true for $t_0=r \tau$.
 This completes the proof.
\end{proof}
\section{Rate of Convergence}
In this section, we shall recover the rates of convergence of the Euler scheme \eqref{sddeem} under different set of assumptions. First we state below the relevant assumptions.
\newline
\textbf{C-4.} There exist constants $C>0$ and  $l_1>0$ such that for any $t \in [0, T]$,
\begin{eqnarray}
 \langle x-x^{'}, \beta(t,  y,  x)-\beta(t,  y,  x^{'}) \rangle \vee|\alpha(t, y, x)-\alpha(t, y, x^{'})|^2 & \leq & C| x-x^{'}|^2 \notag
\\
|\beta(t,  y,  x)-\beta(t,  y^{'},  x) |^2 +|\alpha(t, y, x)-\alpha(t, y^{'}, x)|^2 & \leq & C(1+|y|^{l_1}+|y^{'}|^{l_1})| y-y^{'}|^2 \notag
\end{eqnarray}
for all $ x , x^{'} \in \mathbb R^d$ and $y , y^{'} \in \mathbb R^{d \times k}$.
\newline
\textbf{C-5.} There exist constants $C>0$ and  $l_2>0$ such that for any $t \in [0, T]$,
\begin{eqnarray}
|\beta(t,  y,  x)-\beta(t,  y,  x^{'}) |^2 \leq  C( 1+|x|^{l_2}+|x^{'}|^{l_2}) |x-x^{'}|^2 \notag
\end{eqnarray}
for all $ x , x^{'} \in \mathbb R^d$ and $y \in \mathbb R^{d \times k}$.
\newline
In Corollary \ref{pk1}, we prove that under C-1 and C-4, the rate of convergence of Euler scheme \eqref{sddeem} is one-fourth.  On the other hand, in Corollary \ref{pk} we prove that if one makes  assumption C-5 in addition to C-1 and C-4, then the classical rate  of convergence (one-half) can be recovered (see also \cite{baker2000}, \cite{kuchler2000}, \cite{mao2003},  \cite{maosabanis} and \cite{kloeden2007}).
\begin{cor}
\label{pk1}
Suppose C-1 and C-4 hold, then for all $p>0$, the Euler scheme \eqref{sddeem} converges to the true solution \eqref{sdde} in $\mathcal L^p$-sense with rate $1/4$, i.e.

\begin{eqnarray}
\mathbb E \Bigg [\sup_{0 \leq t \leq T}|X(t)-X_n(t)|^p \Bigg] \leq \bar C n^{-\frac{p}{4}} \notag
\end{eqnarray}
where the constant $\bar C>0$ does not depend on $n$.
\end{cor}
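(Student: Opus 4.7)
My plan is to reuse the SDE-with-random-coefficients framework by setting the coefficients exactly as in \eqref{coff} and to proceed by the same interval-by-interval induction on $[r\tau,(r+1)\tau]$ that drove the proof of Theorem \ref{mainthm}, now tracking a quantitative rate of order $n^{-p/4}$ on each interval. The crucial simplification offered by C-4 is that the one-sided Lipschitz constant in the non-delay argument is a \emph{deterministic global} constant $C$, so the stopping times $\tau_R,\sigma_{nR},\nu_{nR}$ and the event $\{C_R\leq f(R)\}$ that structured the proof of Theorem \ref{**} can be discarded, and one can work directly with $e_n(t):=X(t)-X_n(t)$ without truncation.

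On each interval $[r\tau,(r+1)\tau]$ I would apply It\^o's formula to $|e_n(t)|^2$, raise to the power $p/2$, take supremum and expectation, and use Burkholder-Davis-Gundy as in \eqref{main2}. The drift inner product is split into three parts exactly as in \eqref{impt}: a genuinely Lipschitz piece controlled by $C|e_n|^2$ via C-4; a piece of the form $|X(s)-X_n(\kappa_n(s))|\cdot|\beta(s,Y(s),X_n(\kappa_n(s)))-\beta(s,Y_n(s),X_n(\kappa_n(s)))|$ which by the polynomial-Lipschitz-in-$y$ half of C-4 is bounded by $(1+|Y|^{l_1}+|Y_n|^{l_1})^{1/2}|Y(s)-Y_n(s)|$ times an $X$-dependent factor; and a cross term carrying $|X_n(s)-X_n(\kappa_n(s))|$. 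Since $\delta_j(t)\leq[t/\tau]\tau\leq r\tau$ on this interval, the delay vectors $Y,Y_n$ involve $X,X_n$ only at times already covered by the induction hypothesis
\[
\mathbb E\sup_{0\leq u\leq r\tau}|X(u)-X_n(u)|^p \leq \bar C_r\,n^{-p/4},
\]
and the polynomial prefactors in $|Y|,|Y_n|$ are absorbed using H\"older's inequality together with the $\mathcal L^q$-moment bounds of Lemma \ref{bbnn}, which hold for every $q>0$. This contribution is therefore $\mathcal O(n^{-p/4})$, and an entirely analogous treatment of $|\alpha-\alpha|^2$ handles the diffusion.

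The exponent $1/4$ is forced by the third piece. After Young/Cauchy-Schwarz it produces a term bounded by
\[
\bar K\,\sqrt{\mathbb E\int_{r\tau}^{(r+1)\tau}|X_n(s)-X_n(\kappa_n(s))|^p\,ds},
\]
the direct analogue of the $\bar K_4$-term in \eqref{*p*1}. By the quantitative estimate \eqref{four4} obtained inside the proof of Lemma \ref{l4}, the integrand in expectation is $\mathcal O(n^{-p/2})$, so the square root yields exactly the desired $\mathcal O(n^{-p/4})$. A Gronwall step with the deterministic constants then closes the induction and produces the bound $\bar C_{r+1}n^{-p/4}$ on $[r\tau,(r+1)\tau]$. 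The base case $r=0$ is easier: on $[0,\tau]$ one has $\delta_j(t)\leq 0$ almost everywhere, hence $Y\equiv Y_n$, and the $y$-error vanishes entirely, leaving only the $\sqrt{\cdot}$-contribution of rate $n^{-p/4}$.

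\textbf{Main obstacle.} I expect the principal technical point to be a careful bookkeeping of the H\"older/Young exponents so that (i) the polynomial prefactors in $|Y|,|Y_n|$ multiply only factors with arbitrary moments (controlled by Lemma \ref{bbnn}) rather than the $\mathcal L^p$-difference whose rate we are tracking, and (ii) the cross term $|e_n(s)|\,|X_n(s)-X_n(\kappa_n(s))|$ is split so that \eqref{four4} is the \emph{sole} source of the loss from the classical $1/2$ to $1/4$, in agreement with \cite{gyongy1998}. As usual, the bound is first proved for $p\geq 2$ and then extended to all $p>0$ by H\"older's inequality.
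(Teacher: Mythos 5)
Your proposal is correct and follows essentially the same route as the paper: the same interval-by-interval induction with the coefficients \eqref{coff}, the same three-way splitting \eqref{impt} of $\langle e_n,\bar b_n\rangle$, the same use of the induction hypothesis (via Lemma \ref{bbnn} and H\"older) to make the $y$-increment contribution $\mathcal O(n^{-p/4})$, and the same identification of the square-root term fed by \eqref{four4} as the sole source of the drop from $1/2$ to $1/4$. The one place you diverge is the treatment of the localisation: you discard $\tau_R,\sigma_{nR},\nu_{nR}$ outright on the grounds that C-4 makes the one-sided Lipschitz constant global and deterministic, whereas the paper keeps the whole apparatus of Theorem \ref{**} and instead neutralises it quantitatively, taking $C_R\equiv f(R)\equiv C$ so that $\mathbb P(C_R>f(R))=0$ and choosing $\eta=n^{-p/2}$, $R=n^{(q-2p)/(2(q-p))}$ in \eqref{two2} so that the exit-probability terms are themselves $\mathcal O(n^{-p/2})$. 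Both are valid; yours is the cleaner argument (no tuning of $\eta$ and $R$ against $n$ is needed), while the paper's choice lets it quote \eqref{*p*1} verbatim rather than rederive the Gronwall estimate without the indicator $\mathbbm 1_{\{C_R\leq f(R)\}}$.
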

\begin{proof}  In equation \eqref{two2}, we choose $\eta=n^{-\frac{p}{2}}$, $R=n^{\frac{q-2p}{2(q-p)}}$, $q>2p \geq 4$, then
\begin{eqnarray}
\frac{\eta p}{q}2^qK= \frac{ p K 2^q}{q} n^{-\frac{p}{2}}, \quad \frac{q-p}{q \eta^{p/(q-p)}} \frac{2K}{R^p}=\frac{2(q-p)K}{q} n^{-\frac{p}{2}}\,\,\mbox{and}\,\, \frac{q-p}{q\eta^{p/(q-p)}}\mathbb P(C_R > f(R)) \equiv 0  \label{f**}
\end{eqnarray}
where we choose $C_R \equiv C$ and $f(R) \equiv C$. Then one can show that,
\begin{eqnarray}
\mathbb E \Bigg [\sup_{(k-1)\tau \leq t \leq k \tau}|X(t)-X_n(t)|^p \Bigg] \leq \bar C n^{-\frac{p}{4}}  \label{kk}
\end{eqnarray}
for each $k \in \{1, \ldots, N\}$.   As seen before, SDDE \eqref{sdde}  corresponds  to the ordinary SDE \eqref{sderc} with $X(0)=X_n(0)=\xi(0)$, $Y(t)=Y_n(t)=\xi(\delta(t))$ for all $t \in [0, \tau]$. Therefore, one obtains the rate as in \eqref{kk} for $k=1$, which is $1/4$ instead of $1/2$, due to the last term of \eqref{II2}, that is also the last term in \eqref{*p*1},  the estimate in \eqref{four4} and the fact that
\begin{eqnarray}
\mathbb E \Bigg[ \int_{0}^{\tau}|b(s \wedge \nu_{nR}, X_n(\kappa_n(s \wedge \nu_{nR})))-b_n(s \wedge \nu_{nR}, X_n(\kappa_n(s \wedge \nu_{nR})))|^pds\Bigg] \equiv 0 \notag
\end{eqnarray}
and
\begin{eqnarray}
\mathbb E \Bigg[ \int_{0}^{\tau}|\sigma^i(s \wedge \nu_{nR}, X_n(\kappa_n(s \wedge \nu_{nR})))-\sigma^i_n(s \wedge \nu_{nR}, X_n(\kappa_n(s \wedge \nu_{nR})))|^pds\Bigg] \equiv 0 \notag
\end{eqnarray}
for every $i \in \{1, \ldots, m\}$.

One then assumes that \eqref{kk} holds for $k=r$, i.e. $t \in [(r-1) \tau, r\tau ] $  for some $r \in \{1, \ldots, N-1\}$,  so as to show that it also holds for $t \in [r \tau, (r+1)\tau ] $. Due to inductive assumption, the initial data of SDE \eqref{sderc} satisfy
\begin{eqnarray}
\mathbb E\Big[|X(r \tau)-X_n(r \tau)|^p\Big] \leq \bar C n^{-\frac{p}{4}}\label{kk1}
\end{eqnarray}
and due to \eqref{coff}, C-4,  Lemma \ref{bbnn}  and  inductive assumptions, one obtains the following estimates,
\begin{eqnarray}
&&\mathbb E \Bigg[ \int_{r \tau}^{(r+1)\tau}|b(s \wedge \nu_{nR}, X_n(\kappa_n(s \wedge \nu_{nR})))-b_n(s \wedge \nu_{nR}, X_n(\kappa_n(s \wedge \nu_{nR})))|^pds\Bigg] \notag
\\
&=& \mathbb E \Bigg[ \int_{r\tau}^{(r+1)\tau}|\beta(s, Y(s), X_n(\kappa_n(s))) -\beta(s, Y_n(s), X_n(\kappa_n(s)))|^p\mathbbm 1_{\{s \leq \nu_{nR}\}}ds\Bigg] \notag
\\
&\leq&  \int_{r\tau}^{(r+1)\tau} \mathbb E \big[(|1+ |Y(s)|^{l_1}+ |Y_n(s)|^{l_2})^{\frac{p}{2}}|Y(s)-Y_n(s)|^p \big] ds\notag
\\
&\leq&  3^{\frac{p-1}{2}} \sqrt{(1+2K)}\int_{r\tau}^{(r+1)\tau} \sqrt{\mathbb E\big[|Y(s)-Y_n(s)|^{2p} \big]} ds\notag
\\
&\leq& \bar C n^{-\frac{p}{2}}. \label{kk2}
\end{eqnarray}
One similarly  estimates
\begin{eqnarray}
\mathbb E \Bigg[ \int_{r \tau}^{(r+1) \tau}|\sigma^i(s \wedge \nu_{nR}, X_n(\kappa_n(s \wedge \nu_{nR})))-\sigma^i_n(s \wedge \nu_{nR}, X_n(\kappa_n(s \wedge \nu_{nR})))|^pds\Bigg] \leq \bar C n^{-\frac{p}{2}} \label{kk3}
\end{eqnarray}
for every $i \in \{1,\ldots,m\}$.
Therefore, \eqref{kk} holds due to \eqref{*p*1} by taking into account \eqref{four4}, \eqref{f**},   \eqref{kk1},  \eqref{kk2} and  \eqref{kk3} for $t \in [r \tau, (r+1)\tau]$ and $p \geq 4$. Then the application of H\"older's inequality  completes the proof.
\end{proof}
\begin{cor}
\label{pk}
Suppose C-1, C-4 and C-5 hold, then for all $p>0$, the Euler scheme \eqref{sddeem} converges to the true solution \eqref{sdde} in $\mathcal L^p$-sense with rate $1/2$, i.e.
\begin{eqnarray}
\mathbb E \Bigg [\sup_{0 \leq t \leq T}|X(t)-X_n(t)|^p \Bigg] \leq \bar C n^{-\frac{p}{2}} \notag
\end{eqnarray}
where the constant $\bar C>0$ does not depend on $n$.
\end{cor}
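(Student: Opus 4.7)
The plan is to repeat the inductive scheme of Corollary~\ref{pk1} on each subinterval $[k\tau,(k+1)\tau]$, reaching estimate \eqref{*p*1}, but with one crucial refinement: the cross term in \eqref{impt} that produced the $n^{-p/4}$ loss will be re-estimated using C-5 rather than the crude growth bound from A-1. Exactly as in Corollary~\ref{pk1}, C-4 allows the choice $C_R\equiv C$ and $f(R)\equiv C$, hence $\mathbb P(C_R>f(R))=0$, and I would set $\eta=n^{-p/2}$ and $R=n^{(q-2p)/[2(q-p)]}$ with $q>2p$. Under these choices every summand of \eqref{two2}, \eqref{III2} and \eqref{IV2} is already $\mathcal O(n^{-p/2})$, so the sole obstruction to the $1/2$ rate is the last summand of \eqref{II2}.

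The refinement is to write
\[
b(s,X(s))-b_n(s,X_n(\kappa_n(s))) = \bigl[\beta(s,Y(s),X(s))-\beta(s,Y(s),X_n(\kappa_n(s)))\bigr] + \bigl[\beta(s,Y(s),X_n(\kappa_n(s)))-\beta(s,Y_n(s),X_n(\kappa_n(s)))\bigr]
\]
and apply C-5 to the first bracket (polynomial Lipschitz in $x$) and the second line of C-4 to the second. The third inner product in \eqref{impt} is then bounded above by
\[
|X_n(s)-X_n(\kappa_n(s))|\bigl[P(s)\,|X(s)-X_n(\kappa_n(s))| + Q(s)\,|Y(s)-Y_n(s)|\bigr],
\]
where $P(s), Q(s)$ are random polynomial weights in $|X(s)|,|X_n(\kappa_n(s))|,|Y(s)|,|Y_n(s)|$. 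Two Young inequalities, together with the triangle inequality $|X(s)-X_n(\kappa_n(s))|\le|e_n(s)|+|X_n(s)-X_n(\kappa_n(s))|$, replace the offending \emph{linear} occurrence of $|X_n(s)-X_n(\kappa_n(s))|$ by a \emph{quadratic} one, weighted by $P^2+P+Q^2$, plus terms of order $|e_n(s)|^2$ and $|Y(s)-Y_n(s)|^2$.

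After raising this bound to $p/2$, integrating, and taking expectation, the contribution that replaces the last summand of \eqref{II2} takes the form
\[
\int_{t_0}^{t_1}\mathbb E\bigl[(P(s)^p+Q(s)^p)\,|X_n(s\wedge\nu_{nR})-X_n(\kappa_n(s\wedge\nu_{nR}))|^p\bigr]\,ds.
\]
A Cauchy-Schwarz split in $\omega$ separates the polynomial factor (controlled uniformly in $n$ by Lemma~\ref{bbnn}) from the increment, and Lemma~\ref{l4} applied with exponent $2p$ then yields $\mathcal O(n^{-p/2})$ directly---no further square root intervenes, which is exactly what upgrades the rate from $1/4$ to $1/2$. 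The $|Y(s)-Y_n(s)|^p$ contribution is $\mathcal O(n^{-p/2})$ by the inductive hypothesis, and the remaining summands of \eqref{*p*1} are already of the required order via \eqref{four4} and the analogues of \eqref{kk2}--\eqref{kk3}. Gronwall's inequality followed by Hölder's (to pass from $p\ge 4$ to all $p>0$) closes the induction. I expect the main bookkeeping hurdle to be ensuring that the random weights $P(s),Q(s)$ have all moments bounded uniformly in $n$; this is supplied by Lemma~\ref{bbnn} together with the inductive moment bounds.
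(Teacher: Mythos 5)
Your proposal is correct and rests on the same key mechanism as the paper's proof: C-5 is invoked so that the one-step increment $|X_n(s)-X_n(\kappa_n(s))|$ enters the drift estimate quadratically rather than linearly, whence the Cauchy--Schwarz split in $\omega$ together with \eqref{four4} at exponent $2p$ (and the inductive hypothesis at exponent $2p$ for the $|Y(s)-Y_n(s)|$ contribution) produces $\mathcal O(n^{-p/2})$ in place of the $\mathcal O(n^{-p/4})$ of Corollary \ref{pk1}. The only difference is organisational: the paper replaces the decomposition \eqref{impt} wholesale, pairing $e_n(s)$ with the telescoping differences $b(s,X(s))-b(s,X_n(s))$, $b(s,X_n(s))-b(s,X_n(\kappa_n(s)))$ and $b(s,X_n(\kappa_n(s)))-b_n(s,X_n(\kappa_n(s)))$ so that the cross term never appears, whereas you retain \eqref{impt} and re-estimate the drift factor of the cross term via C-5 and the second line of C-4; both routes feed the same terms into Gronwall's inequality and yield the stated rate.
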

\begin{proof}
In order to prove this, one observes that the estimates of the rate of convergence of terms of $II$ in \eqref{II2} is improved if C-5 is used along with C-4. For this, one could replace the right hand side of \eqref{impt} by the following,
\begin{eqnarray}
&&\langle e_n(s), \bar b_n(s) \rangle = \langle e_n(s), b(s, X(s))- b(s, X_n(s)) \rangle + \langle e_n(s), b(s, X_n(s))-b(s,X_n(\kappa_n(s))) \rangle \notag
\\
&& \hspace{40mm} + \langle e_n(s), b(s,X_n(\kappa_n(s)))-b_n(s,X_n(\kappa_n(s))) \rangle \notag
\end{eqnarray}
which on using \eqref{coff} becomes
\begin{eqnarray}
&&\langle e_n(s), \bar b_n(s) \rangle = \langle e_n(s), \beta(s,  Y(s),  X(s))- \beta(s,  Y(s), X_n(s)) \rangle \notag
\\
&& \hspace{25mm} + \langle e_n(s), \beta(s,  Y(s), X_n(s))-\beta(s,  Y(s), X_n(\kappa_n(s))) \rangle \notag
\\
&& \hspace{40mm} + \langle e_n(s), \beta(s,  Y(s), X_n(\kappa_n(s)))-\beta(s,  Y_n(s), X_n(\kappa_n(s))) \rangle. \notag
\end{eqnarray}
By using C-4 and Cauchy-Schwartz and Young's inequalities, one obtains
\begin{eqnarray}
&& \langle e_n(s), \bar b_n(s) \rangle \leq  (C+1)| e_n(s)|^2  + \frac{1}{2}| \beta(s,  Y(s), X_n(s))-\beta(s, Y(s), X_n(\kappa_n(s))) |^2 \notag
\\
&& \hspace{40mm} + \frac{1}{2} |\beta(s,  Y(s), X_n(\kappa_n(s)))-\beta(s,  Y_n(s), X_n(\kappa_n(s))) |^2 \notag
\end{eqnarray}
which due to C-4 and C-5 yields
\begin{eqnarray}
&& \langle e_n(s), \bar b_n(s) \rangle \leq  (C+1)| e_n(s)|^2  + \frac{C}{2} \{1+ |X_n(s)|^{l_2}+ |X_n(\kappa_n(s)))|^{l_2} \} |X_n(s)-X_n(\kappa_n(s)))|^2\notag
\\
&& \hspace{40mm} + \frac{C}{2} \{1+|Y(s)|^{l_1}+|Y_n(s)|^{l_1}\} |Y(s)-Y_n(s)|^2. \notag
\end{eqnarray}
Therefore, one estimates $II$ in \eqref{II2} by the following,
\begin{eqnarray}
&&II:=(32T)^{\frac{p-2}{2}}\mathbb E\Bigg[\mathbbm {1}_{\{C_R \leq f(R)\}}\int_{t_0}^{u \wedge \nu_{nR}} |\langle e_n(s), \bar b_n(s) \rangle |^{\frac{p}{2}} ds \Bigg] \notag
\\
&& \leq  (96T)^{\frac{p-2}{2}} \Bigg\{(C+1)^{\frac{p-2}{2}}\int_{r \tau}^{u} \mathbb E\bigg[\sup_{r\tau \leq t \leq s}| e_n(s)|^p \bigg] ds\notag
\\
&& \hspace{10mm} + \bigg(\frac{C}{2}\bigg)^{\frac{p}{2}} \mathbb E\Bigg[\int_{r \tau}^{u}\{1+ |X_n(s)|^{l_2}+ |X_n(\kappa_n(s)))|^{l_2} \}^{\frac{p}{2}} |X_n(s)-X_n(\kappa_n(s)))|^p ds\Bigg]\notag
\\
&& \hspace{20mm} + \bigg(\frac{C}{2}\bigg)^{\frac{p}{2}} \mathbb E\Bigg[\int_{r \tau}^{u}  \{1+|Y(s)|^{l_1}+|Y_n(s)|^{l_1}\}^{\frac{p}{2}} |Y(s)-Y_n(s)|^p ds\Bigg]\Bigg\} \notag
\end{eqnarray}
which on the application of H\"older's inequality and Lemma \ref{bbnn} yields
\begin{eqnarray}
&&\hspace{-5mm}II \leq  (96T)^{\frac{p-2}{2}} \Bigg\{(C+1)^{\frac{p-2}{2}}\int_{r \tau}^{u} \mathbb E\bigg[\sup_{r\tau \leq t \leq s}| e_n(s)|^p \bigg] ds \notag
\\
&& \hspace{0mm} + \bigg(\frac{C}{2}\bigg)^{\frac{p}{2}} \hat K \int_{r \tau}^{(r+1) \tau} \Bigg[\sqrt{\mathbb E\Big[|X_n(s)-X_n(\kappa_n(s)))|^{2p}\Big]}  +  \sqrt{\mathbb E\Big[ |Y(s)-Y_n(s)|^{2p} \Big]} \Bigg]ds\Bigg\}. \label{above}
\end{eqnarray}
where $\hat K$ is a constant which does not depend on $n$. Therefore, in  \eqref{main2}, one replaces the estimate \eqref{II2} of $II$ by the estimate \eqref{above} whereas $III$ and $IV$ remain the same. Then the first term of \eqref{above} is incorporated with similar terms of $III$ and $IV$ so as to apply Gronwall's inequality. Furthermore, one observes that the second and third terms of \eqref{above} are of  (improved) order $\mathcal O(n^{-p/2})$ due to Lemma \ref{l4} and inductive assumptions respectively. Thus, by adopting same  arguments as adopted in  the proof of Corollary \ref{pk1} with $\eta=n^{-\frac{p}{2}}$, one obtains the desired rate ($1/2$).
\end{proof}

\section{Numerical Examples}
In this section, we shall illustrate our findings with the help of numerical examples. We consider the following SDDE given by

\begin{eqnarray}
dZ(t)=[a Z(t)+ b \{Z(t-\tau)\}^{l_1}]dt + [\beta_1 + \beta_2 Z(t)+ \beta_3 \{Z(t-\tau)\}^{l_2}]dW(t) \quad \mbox{for} \quad t \in [0,2\tau] \label{sddep1}
\end{eqnarray}
with initial data $\xi(t)$ when $t \in [-\tau,0]$,  $\tau >0$ is a fixed delay, $l_1, l_2 >0$, $a, b, \beta_1, \beta_2, \beta_3 \in \mathbb R$.

When $t \in [0, \tau]$, then  explicit solution of SDE \eqref{sddep1} is given by,
\begin{eqnarray}
\lefteqn{Z(t)=\Phi_{0, t} \Big\{ \xi(0) + \int_{0}^{t} \Phi_{0, s}^{-1} \{b \{\xi(s-\tau)\}^{l_1}-\beta_2 (\beta_1+\beta_3 \{\xi(s-\tau)\}^{l_2}) \}ds} \notag
\\
&& \hspace{70mm}+ \int_{0}^{t} \Phi_{0,s}^{-1} \{\beta_1+\beta_3 \{\xi(s-\tau)\}^{l_2}\} dW(s)\Big\}, \label{solp1}
\end{eqnarray}
where,
\begin{eqnarray}
\Phi_{0, t}=\exp\Big(\Big\{a-\frac{\beta_2^2}{2}\Big\} t + \beta_2 W(t) \Big). \notag
\end{eqnarray}
And when $t \in [\tau, 2\tau]$, then the explicit solution  is given by
\begin{eqnarray}
\lefteqn{Z(t)=\Phi_{\tau, t}\Bigg\{ Z(\tau)+ \int_{\tau}^{t} \Phi_{\tau, s}^{-1}[b\{Z(s-\tau)\}^{l_1}-\beta_2(\beta_1+\beta_3\{Z(s-\tau)\}^{l_2})]ds} \notag
\\
&&\hspace{70mm}+ \int_{\tau}^{t} \Phi_{\tau, s}^{-1}(\beta_1+\beta_3\{Z(s-\tau)\}^{l_2})dW(s)\Bigg\} \label{solp2}
\end{eqnarray}
where,
\begin{eqnarray}
\Phi_{\tau, t}=\exp\Big(\Big\{a-\frac{\beta_2^2}{2}\Big\} (t-\tau) + \beta_2 (W(t) -W(\tau)\Big). \notag
\end{eqnarray}
For Euler  scheme, we divide the interval $[0, \tau]$ into  sub-intervals of width $\frac{\tau}{2^N}$ for some positive integer $N$. Then solution  at $kh$-th grid is given by
\begin{eqnarray}
Z_n((k+1)h)&=&[a Z_n(kh)+ b \{\xi(kh-\tau)\}^{l_1}]h \notag
\\
&&\hspace{20mm} + \Big[\beta_1 + \beta_2 Z_n(kh)+ \beta_3 \{\xi(kh-\tau)\}^{l_2}\Big] (W((k+1)h)-W(kh)) \notag
\end{eqnarray}
with $\xi(t)$ for $t \in [-\tau,0]$, $k=1,\ldots, 2^{N}$. We use this solution to find the solution of the Euler  scheme in the interval $[\tau, 2\tau]$.

We take values of the parameters of SDDE \eqref{sddep1} given in Table \ref{tablep1}. From Table \ref{table2}, we observe that as step size is decreased, the error is decreased too. In Figure \ref{figrr}, the reference line has a slope of $-0.5$.
\begin{table}[ht]
\centering
\caption{Values of Parameters}
\begin{tabular}{|c|c|c|c|c|c|c|c|}
\hline
$p$ & $\tau$ & $a$ & $b$ & $\beta_1$ & $\beta_2$ & $\beta_3$ & $\xi(t)$  for $t \in [-\tau, 0]$\\
\hline
$2$ & $1$ & $-8$ & $4$ & $0$ & $1$ & $1$ & t+1\\
\hline
\end{tabular}
\label{tablep1}
\end{table}
\begin{table}[ht]
\centering
\caption{Effect of decreasing step size on Error}
\begin{tabular}{|c|c|c|c|}
\hline
 & \multicolumn{3}{c|}{$\sqrt \mathbb E(|Z(T)-Z_n(T)|^2)$} \\
\cline{2-4}
$h$  & $l_1=1/2, l_2=1/2$ & $l_1=1, l_2=1$  & $l_1=2, l_2=3$ \\
\hline
$2^{-9}$  & $0.0002332678715832590$ & $0.0001857299504105190$  & $0.0002812168347124360$ \\
$2^{-10}$ & $0.0001171164266162860$ & $0.0000911946799085898$  & $0.0001308779599825470$ \\
$2^{-11}$ & $0.0000605972526234064$ & $0.0000450951550271487$  & $0.0000652194083354262$ \\
$2^{-12}$ & $0.0000314672425957171$ & $0.0000223376219662284$  & $0.0000321943295689175$ \\
$2^{-13}$ & $0.0000159827178333670$ & $0.0000111954499489959$  & $0.0000156672183324178$ \\
$2^{-14}$ & $0.0000079146081811971$ & $0.0000052537027007630$  & $0.0000057778221373887$ \\
\hline slope & & $-0.51157$ &  $ -0.54615 $ \\
\hline
\end{tabular}
\label{table2}
\end{table}
%\begin{comment}
\begin{figure}[H]
%\centering
\caption{Strong order of convergence of the Euler  scheme.}
\centering
	\subfloat[$l_1=1, l_2=1$]{
			\includegraphics[width=.45\textwidth]{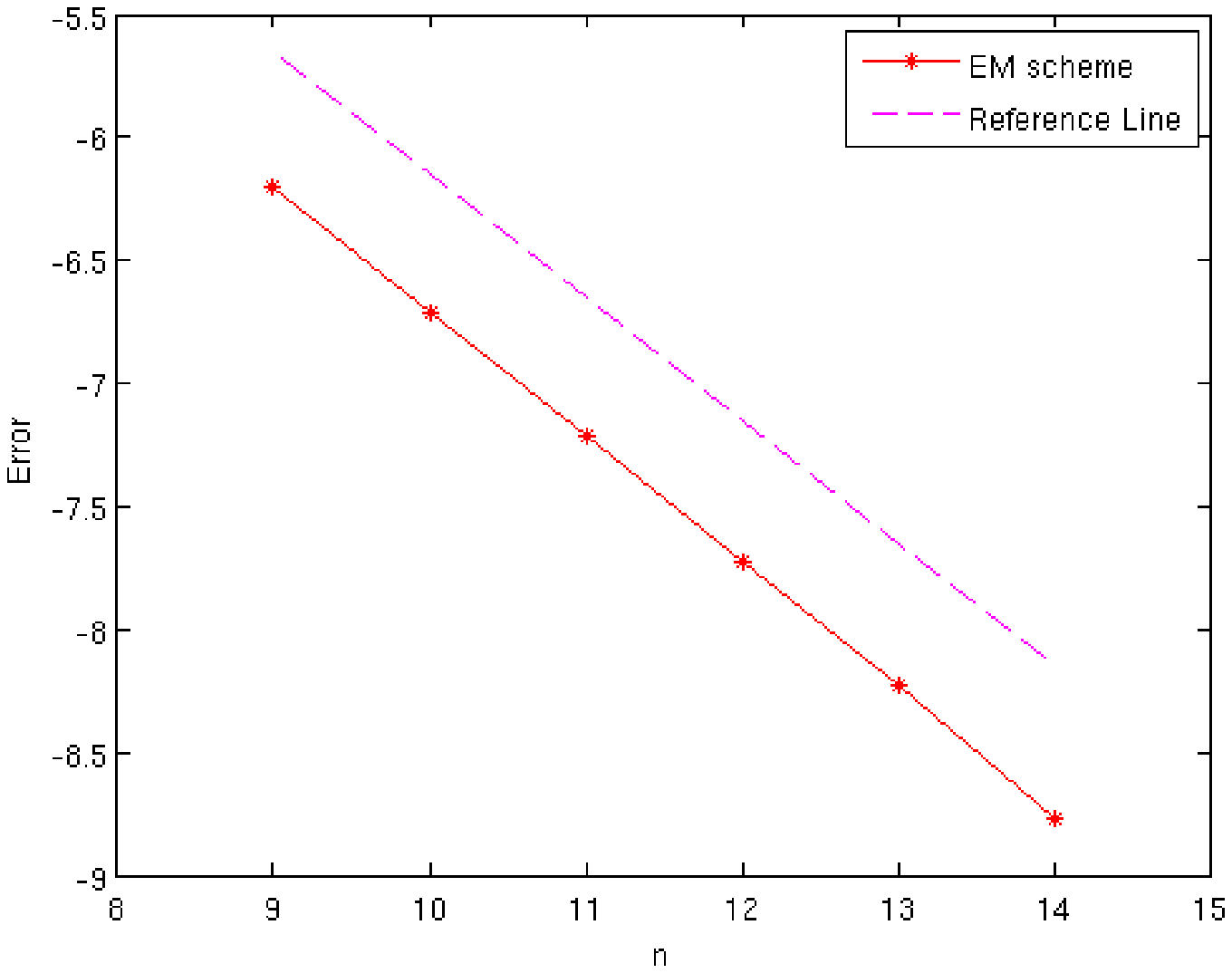}
	}
	%\subfloat[$l_1=1/2, l_2=1/2$]{
	%		\includegraphics[width=.5\textwidth]{l1_l2_half.png}
	%} \\
	\subfloat[$l_1=2, l_2=3$]{
			\includegraphics[width=.45\textwidth]{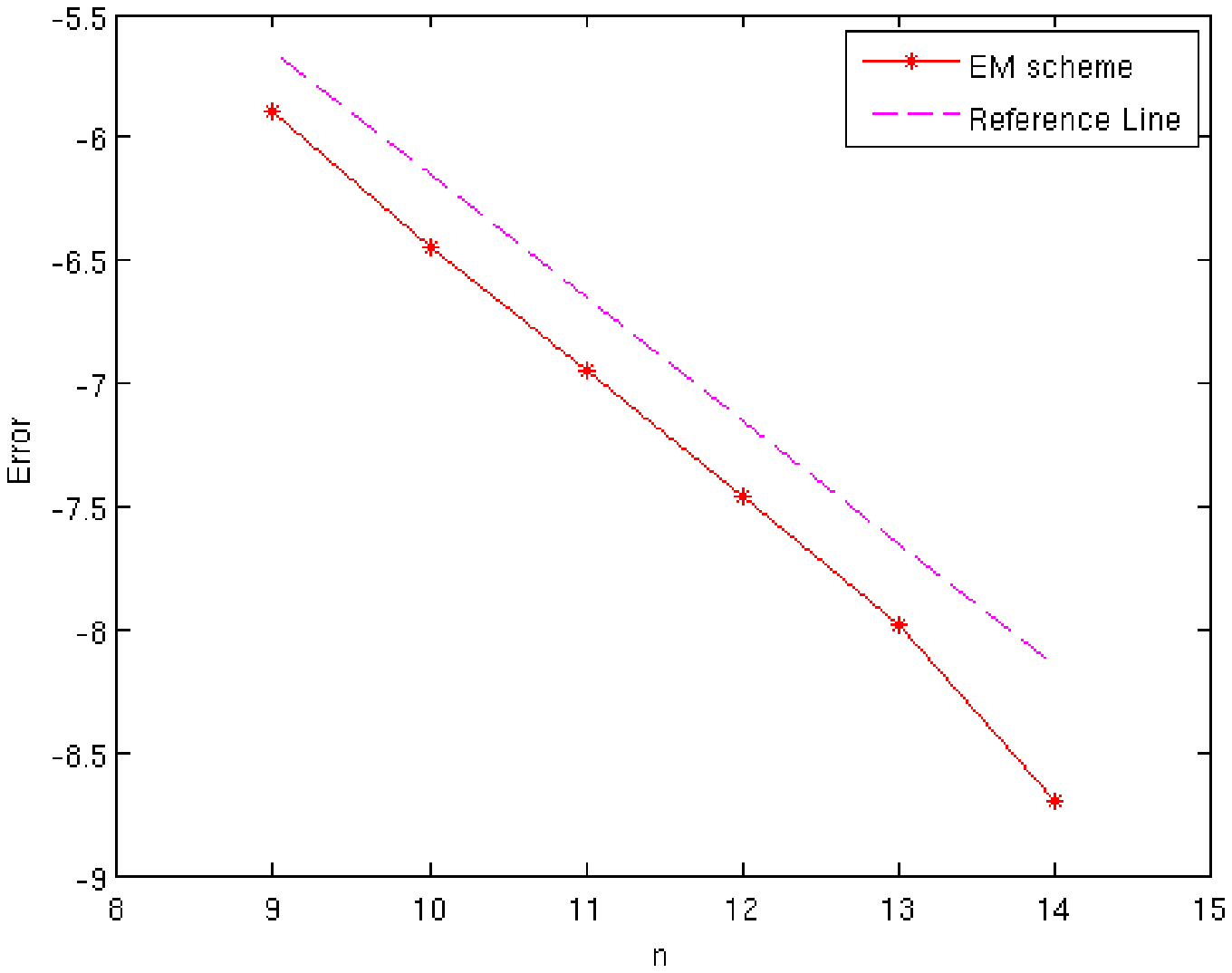}
	}
\label{figrr}
\end{figure}
%\end{comment}
%\newpage
%\bibliographystyle{unsrt}
%\bibliography{refer_new}

\end{document}